\documentclass[oneside]{amsart}
\usepackage[utf8]{inputenc}
\usepackage{amstext}
\usepackage{amsthm}
\usepackage{amssymb}
\usepackage{graphicx}
\usepackage{esint}
\usepackage{amsmath}
\usepackage{mathtools}

\makeatletter
\numberwithin{equation}{section}
\numberwithin{figure}{section}
\theoremstyle{plain}
\newtheorem{theorem}{\protect\theoremname}
\theoremstyle{plain}
\newtheorem{lemma}[theorem]{\protect\lemmaname}


\newcommand{\R}{\mathbb{R}}
\newcommand{\C}{\mathbb{C}}
\newcommand{\Poly}[1]{P_{#1}^{(\alpha)}}
\DeclarePairedDelimiter{\floor}{\lfloor}{\rfloor}
\DeclareMathOperator{\sign}{sign}

\global\long\def\Im{\operatorname{Im}}

\global\long\def\Arg{\operatorname{Arg}}

\makeatother

\providecommand{\lemmaname}{Lemma}
\providecommand{\theoremname}{Theorem}

\begin{document}

\title[Power generating function]{Zero Distribution of Polynomials Generated by a Power of a Cubic Polynomial}
\author{Travis Steele \and Khang Tran}
\email{khangt@mail.fresnostate.edu \and t\_steele@mail.fresnostate.edu}
\address{Department of Mathematics, California State University, Fresno.\\
5245 North Backer Avenue M/S PB108 Fresno, CA 93740}
\keywords{Zero distribution; Generating function; Asymptotics}
\subjclass[2000]{30C15; 26C10; 11C08}
\begin{abstract}
For each $\alpha>0$ and $A(z),B(z)\in\mathbb{C}[z]$,
we study the zero distribution of the sequence of polynomials $\left\{ P_{m}^{(\alpha)}(z)\right\} _{m=0}^{\infty}$
generated by $(1+B(z)t+A(z)t^{3})^{-\alpha}$. We show that for large
$m$, the zeros of $P_{m}^{(\alpha)}(z)$ lie on an explicit curve
on the complex plane.
\end{abstract}

\maketitle

\section{Introduction}
The sequence Chebyshev polynomials of the second kind $\left\{ U_{m}(z)\right\} _{m=0}^{\infty}$
generated by 
\[
\sum_{m=0}^{\infty}U_{m}(z)t^{m}=\frac{1}{1-2zt+t^{2}}
\]
plays an important role in mathematics as it forms an orthogonal basis
with respect to the weight function $1/\sqrt{1-x^{2}}$ supported
on the interval $(-1,1)$ \cite[page 101]{aar}. As a consequence
of this orthogonality, its zeros lie on this $(-1,1)$ interval.
The orthogonality of the sequence and reality of the zeros also hold
for the sequence of Gegenbauer polynomials $\left\{ C_{m}^{(\alpha)}(z)\right\} _{m=0}^{\infty}$
, $\alpha>-1/2$, \cite[page 302]{aar} generated by 
\begin{equation}
\sum_{m=0}^{\infty}C_{m}^{(\alpha)}(z)t^{m}=\frac{1}{\left(1-2zt+t^{2}\right)^{\alpha}}.\label{eq:Gegenbauergen}
\end{equation}
The reality of theses zeros implies that the zeros of a more general
class of polynomials $\left\{ Q_{m}^{(\alpha)}(z)\right\} _{m=0}^{\infty}$
generated by 
\begin{equation}
\sum_{m=0}^{\infty}Q_{m}^{(\alpha)}(z)t^{m}=\frac{1}{(1+B(z)t+A(z)t^{2})^{\alpha}},\qquad A(z),B(z)\in\mathbb{C}[z],\label{eq:Q_mgen}
\end{equation}
lie on the curve 
\[
\Im\frac{B^{2}(z)}{A(z)}=0\qquad\text{and}\qquad0<\Re\frac{B^{2}(z)}{A(z)}<4.
\]
In the case $B(z)=-2z$ and $A(z)=1$, this curve becomes the interval
$(-1,1)$. To quickly see this implication, for each $z$ where $A(z)\notin\mathbb{R^{-}}$
we make the substitution $t\rightarrow t/\sqrt{A(z)}$ (with the principal
cut) in (\ref{eq:Q_mgen}) to conclude 
\[
\sum_{m=0}^{\infty}Q_{m}^{(\alpha)}(z)\frac{t^{m}}{A(z)^{m/2}}=\frac{1}{(1+B(z)t/\sqrt{A(z)}+t^{2})^{\alpha}}.
\]
We compare the right side with (\ref{eq:Gegenbauergen}) to arrive
at 
\[
A(z)^{m/2}C_{m}^{(\alpha)}\left(-\frac{B(z)}{2\sqrt{A(z)}}\right)=Q_{m}^{(\alpha)}(z).
\]
Since Gegenbauer polynomials are odd/even if $m$ is odd/even, the
singularities of the left side are removable and thus the two sides
are equal for all $z$ by analytic continuation. The claim that the
zeros of $Q_{m}^{(\alpha)}(z)$ lie on the given curve follows from
the fact that the zeros of Gegenbauer polynomials lie on $(-1,1)$.

Motivated by the generating function of $Q_{m}^{(\alpha)}(z)$ in
(\ref{eq:Q_mgen}), in this paper, we study the sequence of polynomial
$\left\{ H_{m}^{(\alpha)}(z)\right\} _{m=0}^{\infty}$ generated by
$1/(1+B(z)t+A(z)t^{3})^{\alpha}$. We show that for $\alpha>0$ and
for all large $m$, the zeros of $Q_{m}^{(\alpha)}(z)$ lie on an
explicit curve given in the theorem below.

\begin{theorem}For $\alpha>0$ and $A(z),B(z)\in\mathbb{C}[z]$,
let $\left\{ H_{m}^{(\alpha)}(z)\right\} _{m=0}^{\infty}$ be the
sequence of polynomials generated by 
\begin{equation}
\sum_{m=0}^{\infty}H_{m}^{(\alpha)}(z)t^{m}=\frac{1}{(1+B(z)t+A(z)t^{3})^{\alpha}}.\label{eq:generalgen}
\end{equation}
Then for all $m$ sufficiently large, the zeros of $H_{m}^{(\alpha)}(z)$ lie on
the curve defined by 
\[
\Im\frac{B^{3}(z)}{A(z)}=0\qquad\text{and }\qquad0<-\Re\frac{B(z)^{3}}{A(z)}<\frac{27}{4}.
\]
\end{theorem}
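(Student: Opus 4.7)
The plan is to combine Cauchy's formula with a Darboux / transfer-theorem analysis at the roots of the cubic $1+B(z)t+A(z)t^{3}$. I would first write
\[
H_m^{(\alpha)}(z)=\frac{1}{2\pi i}\oint_{|t|=\rho}\frac{dt}{t^{m+1}(1+B(z)t+A(z)t^{3})^{\alpha}},
\]
factor $1+B(z)t+A(z)t^{3}=A(z)(t-t_{1})(t-t_{2})(t-t_{3})$, and extract the singular contribution of each root $t_{j}=t_{j}(z)$. The fractional singularity $(t-t_{j})^{-\alpha}$ contributes, via contour deformation around a Hankel-type cut, a term of size $t_{j}^{-m}m^{\alpha-1}/\Gamma(\alpha)$. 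Summing gives, uniformly on compact subsets off the discriminant locus,
\[
H_m^{(\alpha)}(z)=\frac{m^{\alpha-1}}{\Gamma(\alpha)}\sum_{j=1}^{3}\frac{t_{j}^{-m}}{A^{\alpha}(-t_{j})^{\alpha}\prod_{k\ne j}(t_{j}-t_{k})^{\alpha}}\bigl(1+O(m^{-1})\bigr).
\]

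Next I would determine the locus on which two of the $t_{j}$ have equal (and minimal) modulus. The substitution $u=B(z)t$ reduces the cubic to $u^{3}+\lambda u+\lambda=0$ with $\lambda=B(z)^{3}/A(z)$, so the geometry of the three roots depends only on the scalar $\lambda$. Analyzing the root moduli via Vieta's formulas and the discriminant $-\lambda^{2}(4\lambda+27)$ shows that a pair of roots is both dominant (smallest in modulus) and of equal modulus precisely when $\lambda$ is real with $\lambda\in(-27/4,0)$; in that range the cubic has one real root together with a conjugate pair of strictly smaller modulus. This recovers exactly the curve $\Im(B^{3}/A)=0$, $0<-\Re(B^{3}/A)<27/4$, with the dominant pair being the complex conjugate pair of roots.

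For $z$ off this curve, a single $t_{j}$ has strictly smallest modulus, the asymptotic above is dominated by one nonzero term, and hence $H_m^{(\alpha)}(z)\ne 0$ for all large $m$. On the curve, writing the conjugate pair as $t_{1,2}=r(z)e^{\pm i\theta(z)}$ and using the resulting conjugate symmetry of the corresponding coefficients, the asymptotic collapses to a real oscillation
\[
H_m^{(\alpha)}(z)\sim\frac{2m^{\alpha-1}}{\Gamma(\alpha)}\,r(z)^{-m}\Re\!\Bigl[c(z)\,e^{-im\theta(z)}\Bigr].
\]
Parametrizing the curve and counting sign changes of this oscillation produces $\Theta(m)$ zeros on the curve, matching the degree of $H_m^{(\alpha)}$; combined with the off-curve nonvanishing and an argument-principle count in a thin tubular neighborhood of the curve, this forces every zero of $H_m^{(\alpha)}$ (for large $m$) to lie on the stated locus.

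The main technical obstacle is obtaining the singularity-analysis asymptotic uniformly in $z$ up to the endpoints of the curve: at $B^{3}/A\to 0$ one root of the cubic escapes to infinity (since the leading coefficient $A$ vanishes), while at $B^{3}/A\to-27/4$ the conjugate pair collides with the real root and the factorization of $(1+Bt+At^{3})^{-\alpha}$ into three $(t-t_{j})^{-\alpha}$ factors degenerates. Local rescalings near each endpoint, together with careful tracking of the branch cuts used to deform the contour through three fractional-power singularities, will absorb most of the technical work; by contrast, the bulk-of-curve oscillation argument and zero count are comparatively routine.
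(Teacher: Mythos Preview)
Your analytic engine---Cauchy's formula plus Hankel-type contours around the three fractional singularities of $(1+Bt+At^{3})^{-\alpha}$, yielding contributions of order $t_{j}^{-m}m^{\alpha-1}/\Gamma(\alpha)$---is exactly what the paper uses, but the overall strategy differs in two ways that let the paper bypass most of the work you flag as obstacles. First, the paper never runs the contour analysis for general $\alpha>0$: it observes that differentiating the generating function in $z$ gives $-\alpha P_{m}^{(\alpha+1)}(z)=\tfrac{d}{dz}P_{m+3}^{(\alpha)}(z)$, so real-rootedness on a fixed interval propagates from $\alpha\in(0,1)$ to all larger $\alpha$ by Rolle; all contour estimates are then done only for $\alpha\in(0,1)$, where the small arcs at the branch points vanish for free. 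Second, and more significantly, the paper never proves off-curve nonvanishing and never needs your equal-minimal-modulus characterization for complex $\lambda$. After the substitution $t\to t/B(z)$ it works entirely with real $z\in(-\infty,-4/27)$, writes $P_{m}^{(\alpha)}(z(\theta))$ as a real integral coming from the real root minus twice the imaginary part of a complex integral coming from the conjugate pair, shows the latter dominates in modulus uniformly in $\theta\in(2\pi/3,\pi)$, and then counts sign changes by tracking the argument of that dominant piece. The count comes out to at least $\lfloor m/3\rfloor=\deg P_{m}^{(\alpha)}$, so the Fundamental Theorem of Algebra forces every zero onto the interval---no tubular argument-principle step and no analysis of complex $\lambda$ is required. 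Your equal-modulus claim is in fact true (a short Vieta computation shows that $|u_{1}|=|u_{2}|$ for two roots of $u^{3}+\lambda u+\lambda$ forces $u_{1}+u_{2}$ and $u_{1}u_{2}$ to be real, hence $\lambda$ real), but the exact degree match renders it unnecessary; and of the two endpoint degenerations you anticipate, only $\lambda\to-27/4$ (equivalently $\theta\to\pi$) actually needs separate treatment in the paper, handled there by a case split on the scale of $\pi-\theta$ relative to $1/m$ and $1/\sqrt{m}$.
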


From the substitutions $t\rightarrow t/B(z)$, it suffices to show
that for any $\alpha>0$ the zeros of the zeros of $P_{m}^{(\alpha)}(z)$
generated by 
\begin{equation}
\sum_{m=0}^{\infty}P_{m}^{(\alpha)}(z)t^{m}=\frac{1}{(1+t+zt^{3})^{\alpha}}\label{eq:Pmgen}
\end{equation}
lie on the interval $(-\infty,-4/27)$ for all large $m$. The case
$\alpha=1$ recovers a sequence of polynomials generated by a rational
function, and the results in several articles (e.g. \cite{tran},
\cite{tran2018zeros}, \cite{forgacs2018hyperbolic}, and \cite{forgacs2016zeros})
imply that the zeros of $P_{m}^{(1)}(z)$ lie on this interval. Although
this claim holds for any $\alpha>0$, we claim that it suffices to
prove the claim for $\alpha\in(0,1)$. Indeed, we differentiate both
sides of 
\[
\frac{1}{(1+t+zt^{3})^{\alpha}}=\sum_{m=0}^{\infty}P_{m}^{(\alpha)}(z)t^{m}
\]
with respect to $z$ and obtain 
\begin{align*}
\sum_{m=0}^{\infty}\frac{d}{dz}\left(P_{m}^{(\alpha)}(z)\right)t^{m} & =\frac{-\alpha t^{3}}{(1+t+zt^{3})^{\alpha+1}}\\
 & =-\alpha\sum_{m=0}^{\infty}P_{m}^{(\alpha+1)}(z)t^{m+3}.
\end{align*}
We deduce that 
\[
-\alpha P_{m}^{(\alpha+1)}(z)=\frac{d}{dz}\left(P_{m+3}^{(\alpha)}(z)\right).
\]
We recall that if all the zeros of a polynomial lie on a real interval,
then so are those of its derivative. The identity above implies that if
we can show that the zeros of $P_{m}^{(\alpha)}(z)$ lie on $(-\infty,-4/27)$
for $\alpha\in(0,1)$, then this property also holds for all $\alpha>0$.
In fact, we will prove the theorem below and show how the zeros of $P_{m}^{(\alpha)}(z)$
distribute on this interval as $m\rightarrow\infty$.

\begin{theorem} \label{thm:zerodensity} Suppose $\alpha\in(0,1)$
and $P_{m}^{(\alpha)}(z)$ is generated by (\ref{eq:Pmgen}). For
large $m$, all the zeros of $P_{m}^{(\alpha)}(z)$ lie on $(-\infty,-4/27)$.
Moreover, the limiting probability density function of the zeros of
$P_{m}^{(\alpha)}(z)$ is 
\[
-\frac{3x\sqrt{x+1}}{2\pi z(3+2x)\sqrt{3-x}},\qquad z\in(-\infty,-4/27),
\]
where 
\begin{equation}
x=\frac{\sqrt[3]{\sqrt{3}\sqrt{27z^{4}+4z^{3}}-9z^{2}}}{\sqrt[3]{2}3^{2/3}z}-\frac{\sqrt[3]{\frac{2}{3}}}{\sqrt[3]{\sqrt{3}\sqrt{27z^{4}+4z^{3}}-9z^{2}}}\label{eq:xform}
\end{equation}
is the unique real zero in $t$ of $1+t+zt^{3}$ .\end{theorem}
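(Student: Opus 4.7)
The plan is to derive a uniform asymptotic for $P_m^{(\alpha)}(z)$ from the Cauchy integral
\[
P_m^{(\alpha)}(z)=\frac{1}{2\pi i}\oint_{|t|=\varepsilon}\frac{dt}{t^{m+1}(1+t+zt^{3})^{\alpha}}
\]
and read off the zeros from its oscillatory behavior. First I would analyze the branch points, namely the roots of $1+t+zt^{3}$. For $z\in(-\infty,-4/27)$ the discriminant $-z(4+27z)$ is negative, so the cubic has one positive real root $x=x(z)$ and a complex conjugate pair $t_{\pm}(z)=\rho e^{\pm i\theta}$. Vieta's relations $t_++t_-+x=0$, $t_+t_-x=-1/z$, together with $1+x+zx^{3}=0$, give the closed forms
\[
\rho=\frac{x}{\sqrt{1+x}},\qquad \cos\theta=-\frac{\sqrt{1+x}}{2},\qquad \sin\theta=\frac{\sqrt{3-x}}{2},
\]
and the inequality $\rho^{2}=x^{2}/(1+x)<x^{2}$ shows that $t_\pm$ are the branch points closest to the origin. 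As $z$ moves from $-\infty$ to $-4/27$, $x$ moves monotonically through $(0,3)$ and $\theta$ through $(2\pi/3,\pi)$.

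Next I would deform the contour so that it wraps tightly around branch cuts emanating radially from $t_+$ and $t_-$. Since the two cuts yield complex-conjugate contributions, $P_m^{(\alpha)}(z)=2\,\Re\,I_+(z)+E_m(z)$, where $I_+(z)$ is a Hankel-type integral based at $t_+$ and $E_m(z)$ is exponentially smaller (coming from the further branch point at $x$). Expanding $1+t+zt^{3}=z(t-t_+)(t-t_-)(t-x)$, pulling out the algebraic singularity $(1-t/t_+)^{-\alpha}$, and applying Watson's lemma (equivalently the Flajolet--Odlyzko transfer theorem) gives
\[
P_m^{(\alpha)}(z)=\frac{2m^{\alpha-1}}{\Gamma(\alpha)\,\rho^{m}}\,|K(z)|\cos\bigl(m\theta(z)-\arg K(z)\bigr)\bigl(1+O(m^{-1})\bigr),
\]
uniformly on compact subsets of the interval, with $K(z)=[\,z(-t_+)(t_+-t_-)(t_+-x)\,]^{-\alpha}$, analytic and nonvanishing there.

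From here the zero count and density follow. The cosine factor vanishes exactly when $m\theta(z)-\arg K(z)\equiv \pi/2\pmod\pi$; monotonicity of $\theta$ gives $\lfloor m(\pi-2\pi/3)/\pi\rfloor\approx m/3=\deg P_m^{(\alpha)}$ such solutions on the interval, and Hurwitz (or Rouch\'e on discs of radius $\asymp 1/m$) upgrades each approximate zero to a genuine one, exhausting the zero set by degree count. The zero density is then $m\theta'(z)/\pi$; normalizing by the total mass $m/3$ produces the probability density $\tfrac{3}{\pi}|\theta'(z)|$. Implicit differentiation of $\cos\theta=-\sqrt{1+x}/2$ and $1+x+zx^{3}=0$ yields
\[
\frac{d\theta}{dx}=\frac{1}{2\sqrt{(1+x)(3-x)}},\qquad \frac{dx}{dz}=\frac{x^{4}}{3+2x},
\]
and substituting $x^{4}=-x(1+x)/z$ from the cubic collapses the product to exactly $-\frac{3x\sqrt{x+1}}{2\pi z(3+2x)\sqrt{3-x}}$.

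The main technical obstacle will be producing the uniform asymptotic with an error sharp enough for Rouch\'e to locate each zero to precision $O(1/m)$, and extending it into a complex neighborhood of the interval so no zeros escape. This is delicate at both endpoints: at $z=-4/27$ the pair $t_\pm$ coalesces, so $K(z)$ blows up and the standard Hankel expansion must be replaced by a merging-branch-point (Airy-type) uniform asymptotic; at $z\to-\infty$ all three singularities collapse to the origin and a rescaling $t=s/|z|^{1/3}$ is needed. Off the interval, one must also verify that $|t_+(z)|\neq|t_-(z)|$, so only a single branch point dominates and $P_m^{(\alpha)}(z)$ is asymptotic to a nonvanishing exponential, ruling out stray zeros and complementing the degree-counting argument.
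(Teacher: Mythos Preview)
Your approach---Cauchy integral, deform to Hankel loops at the dominant branch points $t_\pm$, extract a Watson-lemma cosine asymptotic, count oscillations, match to degree---is exactly the skeleton the paper follows, and your density calculation via $\tfrac{3}{\pi}|\theta'(z)|$ (with the chain-rule steps $d\theta/dx$ and $dx/dz$) is identical to theirs. The root identities $\rho=x/\sqrt{1+x}$, $\cos\theta=-\sqrt{1+x}/2$ are equivalent to the paper's Lemma~\ref{rootprops}.

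Where you diverge is in how you certify the zeros, and here the paper is more economical. You plan to extend the asymptotic into a complex neighborhood of the interval and invoke Rouch\'e/Hurwitz; this forces you to confront the endpoint degenerations head-on (your proposed Airy regime at $z=-4/27$, rescaling at $z\to-\infty$) and to verify $|t_+|\ne|t_-|$ off the real axis. The paper never leaves the real line. It writes
\[
P_m^{(\alpha)}(z(\theta))=\frac{\sin(\alpha\pi)}{\pi(-z)^\alpha}\Bigl(\int_0^\infty A_m\,dt-2\,\Im\int_0^\infty B_m\,dt\Bigr),
\]
where $\int A_m$ is the real Hankel integral supported at the far branch point $x$ and $\int B_m$ at $y=t_+$, and proves the single uniform inequality $0<\int A_m<|\int B_m|$ on all of $\theta\in(2\pi/3,\pi)$. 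Granted this, whenever $\int B_m\in i\mathbb{R}^{\pm}$ the sign of $P_m^{(\alpha)}$ is forced; the Intermediate Value Theorem then supplies a genuine zero between consecutive such points, and tracking $\arg\int B_m$ over $(2\pi/3,\pi)$ produces $\lfloor m/3\rfloor$ of them, exhausting the degree. No complex neighborhood, no Rouch\'e, no Airy transition is needed: the coalescence at $z=-4/27$ is handled not by a uniform asymptotic but by a direct estimate (Lemma~\ref{inequality}) showing the domination $\int A_m<|\int B_m|$ persists there. So your plan is correct, but the paper buys simplicity by downgrading the requirement from ``uniform asymptotic with $O(1/m)$ error in a complex strip'' to ``uniform domination of the subdominant contribution by the dominant one on the real interval''.
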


For the plots of the all the zeros of $P_{m}^{(\alpha)}(z)$, $1\le m\le50$,
and the limiting probability density function, see Figures \ref{fig:zerosP_m}
and \ref{fig:limitingprob} respectively.

\begin{figure}
\centering \includegraphics[scale=0.5]{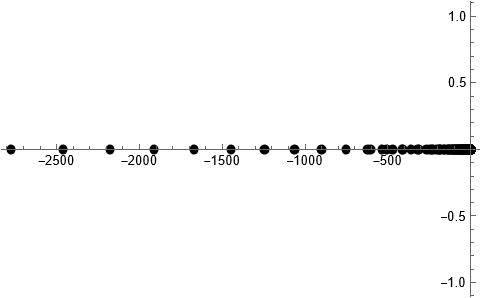} \caption{ Zeros of $P_{m}^{(\alpha)}(z)$ for $1\protect\leq m\protect\leq50$
and $\alpha=7.5$.}\label{fig:zerosP_m}
\end{figure}

\begin{figure}
\centering \includegraphics[scale=0.25]{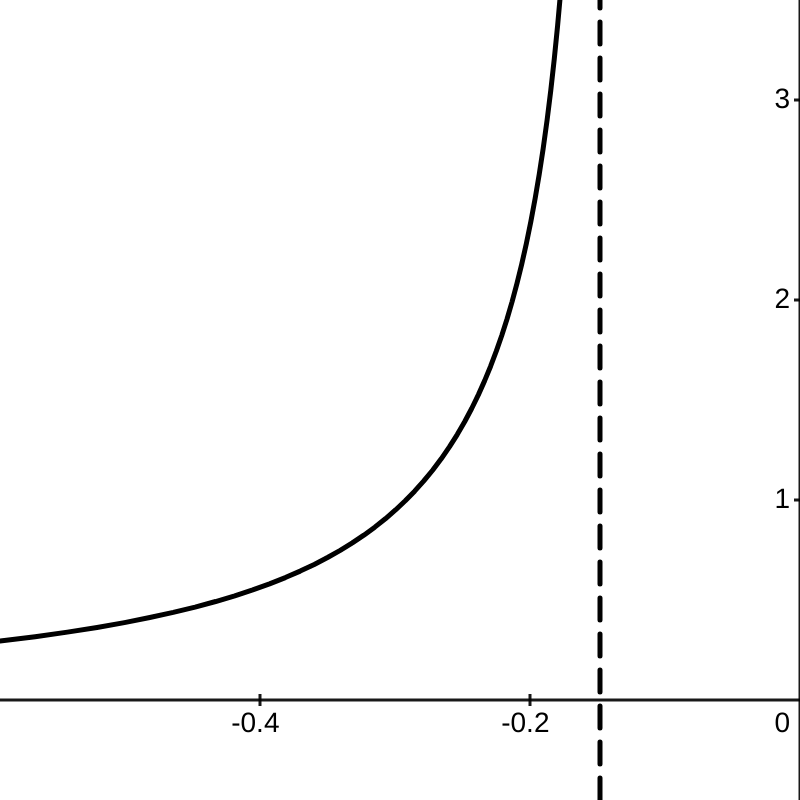} \caption{Limiting probability density function of the zeros of $\{P_{m}^{(\alpha)}(z)\}_{m=0}^{\infty}$.
The vertical dashed line is drawn at $z=-\frac{4}{27}$.}\label{fig:limitingprob}
\end{figure}

The remainder of this article is organized as follows. In Section
\ref{sec1} we establish some elementary results regarding $P_{m}^{(\alpha)}(z)$.
Section \ref{integral} is dedicated to expressing this polynomial
as the difference of two real integrals, whose behavior as $m\to\infty$
is studied in Section \ref{asymp}. Finally we prove Theorem \ref{thm:zerodensity}
in Section \ref{main}.

\section{Basic Properties}

\label{sec1} In this section we establish some basic results about
the polynomial $P_{m}^{(\alpha)}(z)$ defined in (\ref{eq:Pmgen}).
We begin by expressing this polynomial in its standard form.

\begin{lemma} \label{explicitForm} For each $\alpha>0$, let $\{P_{m}^{(\alpha)}(z)\}_{m=0}^{\infty}$
be the sequence of polynomials defined in (\ref{eq:Pmgen}). The following
holds for each $m$: 
\[
P_{m}^{(\alpha)}(z)=\sum_{k=0}^{\left\lfloor m/3\right\rfloor }\binom{-\alpha}{m-2k}\binom{m-2k}{k}z^{k}.
\]
In particular, 
\[
\deg P_{m}^{(\alpha)}(z)=\left\lfloor \frac{m}{3}\right\rfloor 
\]
and 
\[
\lim_{z\to-\infty}\sign P_{m}^{(\alpha)}(z)=(-1)^{m-\left\lfloor m/3\right\rfloor }.
\]
\end{lemma}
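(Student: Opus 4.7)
The plan is to apply the generalized binomial theorem directly to the generating function and extract the coefficient of $t^m$. Writing $1+t+zt^3 = 1 + t(1+zt^2)$, I would expand
\[
\frac{1}{(1+t+zt^{3})^{\alpha}} = \sum_{n=0}^{\infty}\binom{-\alpha}{n}\bigl(t+zt^{3}\bigr)^{n}
\]
using the fact that this series converges as a formal power series in $t$. Then I would apply the ordinary binomial theorem to each inner factor,
\[
(t+zt^{3})^{n} = \sum_{k=0}^{n}\binom{n}{k}z^{k}t^{n+2k},
\]
and reindex the resulting double sum by $m = n+2k$. Matching coefficients of $t^m$ yields $n = m-2k$ with the constraint $n\ge k$, i.e.\ $k\le m/3$, which gives exactly the claimed formula.

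For the degree statement, I would observe that since $\alpha>0$, each factor $-\alpha-j$ appearing in $\binom{-\alpha}{m-2k}$ is strictly negative, so $\binom{-\alpha}{m-2k}\binom{m-2k}{k}$ is nonzero for every admissible $k$. In particular the top coefficient, at $k=\lfloor m/3\rfloor$, is nonzero, so $\deg P_m^{(\alpha)} = \lfloor m/3\rfloor$.

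For the sign at $z\to-\infty$, I would use the identity $\binom{-\alpha}{n} = (-1)^{n}\binom{\alpha+n-1}{n}$, whose right-hand side is strictly positive. Hence the leading coefficient has sign $(-1)^{m-2\lfloor m/3\rfloor} = (-1)^{m}$, while $z^{\lfloor m/3\rfloor}$ contributes sign $(-1)^{\lfloor m/3\rfloor}$ as $z\to-\infty$. Multiplying and simplifying $(-1)^{m+\lfloor m/3\rfloor} = (-1)^{m-\lfloor m/3\rfloor}$ gives the claimed limiting sign.

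There is no real obstacle here: the main things to be careful about are the range of summation (ensuring $n\ge k$ to get the floor function correctly), and the non-vanishing of $\binom{-\alpha}{n}$, which relies on $\alpha$ being positive rather than a non-positive integer. Everything else is bookkeeping.
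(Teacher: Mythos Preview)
Your proposal is correct and follows essentially the same approach as the paper: both apply the generalized binomial series to $(1+t+zt^3)^{-\alpha}$, expand $(t+zt^3)^n$ by the ordinary binomial theorem, reindex by $m=n+2k$, and then read off the degree and leading sign from the non-vanishing of $\binom{-\alpha}{n}$ for $\alpha>0$. Your sign computation via $\binom{-\alpha}{n}=(-1)^n\binom{\alpha+n-1}{n}$ is slightly more explicit than the paper's, but the argument is the same.
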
 
\begin{proof}
We expand $(1+t+zt^{3})^{-\alpha}$ using the binomial series to obtain
\[
\frac{1}{(1+t+zt^{3})^{\alpha}}=\sum_{m=0}^{\infty}\binom{-\alpha}{m}(t+zt^{3})^{m}
\]
where 
\[
\binom{-\alpha}{m}=\frac{(-\alpha)(-\alpha-1)\cdots(-\alpha-m+1)}{m!}.
\]
The factor $(t+zt^{3})^{m}$ can be expanded using the binomial theorem:
\[
\frac{1}{(1+t+zt^{3})^{\alpha}}=\sum_{m=0}^{\infty}\binom{-\alpha}{m}\sum_{k=0}^{m}\binom{m}{k}t^{m-k}(zt^{3})^{k},
\]
which simplifies to 
\[
\frac{1}{(1+t+zt^{3})^{\alpha}}=\sum_{m=0}^{\infty}\sum_{k=0}^{m}\binom{-\alpha}{m}\binom{m}{k}z^{k}t^{m+2k}.
\]
The sum can be rearranged by collecting terms with the same power
of $t$: 
\[
\sum_{m=0}^{\infty}\sum_{k=0}^{m}\binom{-\alpha}{m}\binom{m}{k}z^{k}t^{m+2k}=\sum_{m=0}^{\infty}\sum_{p+2q=m}\binom{-\alpha}{p}\binom{p}{q}z^{q}t^{m}.
\]
Now parameterize the $1+\floor{m/2}$ non-negative integer solutions
to the equation $p+2q=m$ by $(p,q)=(m-2k,k)$ to obtain 
\[
\sum_{p+2q=m}\binom{-\alpha}{p}\binom{p}{q}z^{q}=\sum_{k=0}^{\floor{m/2}}\binom{-\alpha}{m-2k}\binom{m-2k}{k}z^{k}.
\]
Since $k$ is non-negative here, the factor $\binom{m-2k}{k}$ is
nonzero if and only if $k\leq m-2k$. This inequality implies that
the possible values of $k$ on the right side are $k=0,1,2,\dots,\left\lfloor m/3\right\rfloor $.
Thus, 
\[
\frac{1}{(1+t+zt^{3})^{\alpha}}=\sum_{m=0}^{\infty}\left(\sum_{k=0}^{\floor{m/3}}\binom{-\alpha}{m-2k}\binom{m-2k}{k}z^{k}\right)t^{m}.
\]
Finally, $\binom{-\alpha}{m-2k}$ is nonzero whenever $\alpha>0$
and $m-2k\geq0$. In fact, 
\begin{equation}
\sign\binom{-\alpha}{m-2k}=(-1)^{m}.\label{eq:signcoeff}
\end{equation}
The end behavior of $\Poly{m}(z)$ as $z\to-\infty$ is determined
by the leading coefficient, whose sign is eventually $(-1)^{m-\floor{m/3}}$. 
\end{proof}
As a consequence of Lemma \ref{explicitForm} and (\ref{eq:signcoeff}),
$\Poly{m}(z)\ne0$ when $z$ is non-negative. When $z$ is negative,
two cases arise in the behavior of the zeros of $1+t+zt^{3}$. As
a polynomial in $t$, the discriminant of $1+t+zt^{3}$ is $-27z^{2}-4z$,
which is non-negative if and only if $z\in[-\frac{4}{27},0]$. Hence,
$1+t+zt^{3}$ has one real root and a pair of conjugate non-real roots
whenever $z\in(-\infty,-\frac{4}{27})$. Our goal is to show that
all the roots of $\Poly{m}(z)$ lie in this interval for $m$ sufficiently
large. To achieve this goal, we will count the number of zeros of
$P_{m}^{(\alpha)}(z)$ on this interval and compare this number with
the degree of this polynomial. Thus, for the remainder of this paper,
we shall assume $z\in(-\infty,-\frac{4}{27})$ and denote by $x$
the real root of $1+t+zt^{3}$. A computer algebra system provides
an explicit formula of $x$ in (\ref{eq:xform}). We write the non-real
roots as $re^{\pm i\theta}$, where $r>0$ and $\theta\in(0,\pi)$.
Our next result lists some important relationships between these roots.

\begin{lemma} \label{rootprops} For each $z\in(-\infty,-\frac{4}{27})$
, let $x$ be the real root of $1+t+zt^{3}$ as a polynomial in $t$,
and $y=re^{i\theta}$, $r>0$, be its non-real root in the upper half-plane.
Then $\theta\in(2\pi/3,\pi)$ and 
\begin{align*}
r & =\frac{1-4\cos^{2}\theta}{2\cos\theta},\\
x & =4\cos^{2}\theta-1,\\
z & =\frac{4\cos^{2}\theta}{(1-4\cos^{2}\theta)^{3}},\\
|x-y|^{2} & =2x^{2}+r^{2}.
\end{align*}
\end{lemma}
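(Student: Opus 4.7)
The plan is to read off everything from Vieta's formulas applied to the monic cubic $t^{3}+(1/z)t+(1/z)$ whose roots are $x$, $y=re^{i\theta}$, and $\bar{y}=re^{-i\theta}$. These formulas yield the three identities
\[
x+2r\cos\theta=0,\qquad 2xr\cos\theta+r^{2}=\frac{1}{z},\qquad xr^{2}=-\frac{1}{z}.
\]
The first identity immediately gives $x=-2r\cos\theta$; since $r>0$ and the real root $x$ of $1+t+zt^{3}$ is positive for $z<-4/27$ (as can be seen from $xr^{2}=-1/z>0$), it follows that $\cos\theta<0$, so a priori $\theta\in(\pi/2,\pi)$.

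Next I would eliminate $r^{2}$ between the second and third identities. Adding $z$ times the second to $1$ gives $1+z(2xr\cos\theta+r^{2})=1+1=2$, which is awkward; more directly, $r^{2}=-1/(xz)$ from the third relation, substituted into the second together with $x=-2r\cos\theta$, produces $-4r^{2}\cos^{2}\theta+r^{2}=1/z$, i.e.\ $r^{2}(1-4\cos^{2}\theta)=1/z$. Comparing with $r^{2}=-1/(xz)$ yields $x=4\cos^{2}\theta-1$. Solving $x=-2r\cos\theta$ for $r$ then gives $r=(1-4\cos^{2}\theta)/(2\cos\theta)$, and substituting into $r^{2}(1-4\cos^{2}\theta)=1/z$ gives $z=4\cos^{2}\theta/(1-4\cos^{2}\theta)^{3}$.

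For the precise range of $\theta$, I would study the map $\theta\mapsto z(\theta)$ on $(\pi/2,\pi)$. Writing $u=\cos^{2}\theta$, we have $z=4u/(1-4u)^{3}$, which is positive for $u<1/4$ and negative for $u>1/4$; thus only $u\in(1/4,1)$, equivalently $\theta\in(2\pi/3,\pi)$, gives $z<0$. A direct computation shows $4v^{3}-27v-27=(v-3)(2v+3)^{2}$ with $v=4u-1$, so $z=-4/27$ precisely when $u=1$, i.e.\ at $\theta=\pi$; and $z\to-\infty$ as $u\to1/4^{+}$, i.e.\ as $\theta\to(2\pi/3)^{+}$. Monotonicity of $z(\theta)$ on this interval then gives the stated bijection with $(-\infty,-4/27)$, confirming $\theta\in(2\pi/3,\pi)$ and in particular $r>0$.

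Finally, the identity $|x-y|^{2}=2x^{2}+r^{2}$ is a one-line computation: expanding $|x-re^{i\theta}|^{2}=x^{2}-2xr\cos\theta+r^{2}$ and using $-2r\cos\theta=x/r\cdot r=x$ (equivalently $-2xr\cos\theta=x^{2}$) turns the middle term into $x^{2}$. I expect no real obstacle here; the only thing demanding slight care is the sign bookkeeping that pins down $\theta\in(2\pi/3,\pi)$ rather than a larger interval, which is why the behavior of $z(\theta)$ near the endpoints $2\pi/3$ and $\pi$ must be checked explicitly.
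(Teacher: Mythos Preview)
Your argument is correct and follows essentially the same route as the paper: Vieta's formulas give $x=-2r\cos\theta$, then elimination yields the formulas for $x$, $r$, and $z$, and the identity $|x-y|^{2}=2x^{2}+r^{2}$ drops out of the expansion $x^{2}-2xr\cos\theta+r^{2}$ (the paper phrases this as the Law of Cosines). The only cosmetic differences are that you deduce $x>0$ from $xr^{2}=-1/z>0$ rather than the Intermediate Value Theorem, and you pin down $\theta\in(2\pi/3,\pi)$ by reading the sign of $z(\theta)$ rather than by reading the sign of $r$ from its formula; your additional endpoint and monotonicity checks are not needed for this lemma (the paper establishes the bijection separately later), but they do no harm.
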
 
\begin{proof}
Vieta's formulas (see \cite{VF}) provide us the following relations
between the roots of $1+t+zt^{3}$: 
\begin{align*}
x+y+\overline{y} & =0,\\
xy+x\overline{y}+y\overline{y} & =\frac{1}{z},\\
xy\overline{y} & =-\frac{1}{z}.
\end{align*}
Write $x=qr$ for some $q\in\R$. Note that, since $z$ is negative,
$1+t+zt^{3}$ is negative for $t$ sufficiently large. But $1+0+z(0)^{3}=1$
is positive, so it follows by the Intermediate Value Theorem that
the real root $x$ of $1+t+zt^{3}$ is positive. Hence, $q$ is positive
as well. With this new notation, Vieta's formulas become 
\begin{align}
qr+re^{i\theta}+re^{-i\theta} & =0,\label{eq:sq}\\
qr^{2}e^{i\theta}+qr^{2}e^{-i\theta}+r^{2} & =\frac{1}{z},\text{ and}\label{eq:lin}\\
qr^{3} & =-\frac{1}{z}.\label{eq:const}
\end{align}
Now we solve for $q$ and $r$ in terms of $\theta$. From the first
of these equations, we divide by $r$ and then subtract $q$ on both
sides to obtain 
\begin{equation}
e^{-i\theta}+e^{-i\theta}=-q.\label{eq:q1}
\end{equation}
As $\cos\theta=(e^{-i\theta}+e^{-i\theta})/2$, this equation is equivalent
to 
\begin{equation}
q=-2\cos\theta.\label{eq:q2}
\end{equation}
Since we know $q$ is positive, this implies $\theta\in(\pi/2,\pi)$.
Next, we factor (\ref{eq:lin}) to obtain 
\[
r^{2}(q(e^{i\theta}+e^{-i\theta})+1)=\frac{1}{z}.
\]
Applying (\ref{eq:q1}), we deduce that 
\[
r^{2}(1-q^{2})=\frac{1}{z}.
\]
By (\ref{eq:q2}), this becomes 
\[
r^{2}(1-4\cos^{2}\theta)=\frac{1}{z}.
\]
We add (\ref{eq:const}) to this and we get 
\[
r^{3}q+r^{2}(1-4\cos^{2}\theta)=0.
\]
Dividing both sides by $r^{2}$ and then isolating $r$, we obtain
\[
r=\frac{-(1-4\cos^{2}\theta)}{q}.
\]
Finally, we apply (\ref{eq:q2}) again and conclude that 
\begin{equation}
r=\frac{1-4\cos^{2}\theta}{2\cos\theta}.\label{eq:r}
\end{equation}
Since the denominator of the right side is negative on the interval
$(\frac{\pi}{2},\pi)$, yet within this interval the numerator is
only negative when $\theta\in(\frac{2\pi}{3},\pi)$, and since $r$
is positive, it follows that $\theta\in(\frac{2\pi}{3},\pi)$. Consequently,
\begin{equation}
x=4\cos^{2}\theta-1.\label{eq:x}
\end{equation}
We can also compute $z$ in terms of $\theta$: 
\begin{equation}
z=-\frac{1}{qr^{3}}=\frac{4\cos^{2}\theta}{(1-4\cos^{2}\theta)^{3}}.\label{eq:z}
\end{equation}
To prove the last equation in this lemma, which is 
\begin{equation}
|x-y|^{2}=2x^{2}+r^{2},\label{eq:diff}
\end{equation}
we apply the Law of Cosines to the triangle in the complex plane formed
by the points $0$, $x$, and $y$, to conclude that 
\[
|x-y|^{2}=x^{2}+r^{2}-2xr\cos\theta,
\]
from which (\ref{eq:diff}) follows since $x=-2r\cos\theta$. 
\end{proof}

\section{A formula for $P_{m}^{(\alpha)}(z)$}

\label{integral}

Let $z\in(-\infty,-4/27)$ and $\alpha\in(0,1)$, and keep the notation
for the roots of $1+t+zt^{3}$ established in the previous section
(c.f. Lemma \ref{rootprops}). In this section we apply the Cauchy
Integral Formula (Lemma 10.2 of \cite{ST}) to $(1+t+zt^{3})^{-\alpha}$
to get an integral representation of each $P_{m}^{(\alpha)}(z)$.
Then we will deform the contour to express $P_{m}^{(\alpha)}(z)$
as the difference of two integrals. Finally, we analyze the asymptotic
behavior of each of these resulting integrals.

Since the zeros of $1+t+zt^{3}$ are $x$ and $re^{\pm i\theta}$,
we can factor $1+t+zt^{3}$ as 
\[
1+t+zt^{3}=-z(x-t)(t^{2}+xt+r^{2}),
\]
where the right side is the same as 
\[
-z(x-t)((t-r\cos\theta)^{2}+r^{2}\sin^{2}\theta).
\]
Define 
\begin{equation}
G_{m}(z,t)=\frac{1}{(-z)^{\alpha}(x-t)^{\alpha}(t^{2}+xt+r^{2})^{\alpha}t^{m+1}},\label{Gm}
\end{equation}
where $x$, $y$, $r$, and $\theta$ are as in Lemma \ref{rootprops}.
Since $-z$, $x-t$ and $t^{2}+xt+r^{2}$ are all positive real numbers
when $t=0$, using the principal cut, we have for small $t$: 
\[
\frac{1}{(1+t+zt^{3})^{\alpha}}=\frac{1}{(-z)^{\alpha}(x-t)^{\alpha}(t^{2}+xt+r^{2})^{\alpha}}.
\]
Applying the Cauchy Integral Formula here gives an expression for
each $P_{m}^{(\alpha)}(z)$ as a contour integral.

\begin{lemma} \label{int1} Let $x$, $y$, $r$, and $\theta$ be
as in Lemma \ref{rootprops} and $G_{m}$ be defined as in (\ref{Gm}).
There exists a small positive real number $\varrho$ such that for
all $m\geq0$, 
\[
P_{m}^{(\alpha)}(z)=\frac{1}{2\pi i}\oint_{|t|=\varrho}G_{m}(z,t)dt.
\]
\end{lemma}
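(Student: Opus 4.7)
The plan is to apply Cauchy's Integral Formula directly to the generating function, then rewrite the resulting integrand in factored form using the roots of $1+t+zt^{3}$, being careful about branch cuts.

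First I would fix $z\in(-\infty,-4/27)$ and recall from Lemma \ref{rootprops} that $x>0$ and $r>0$. I would then choose $\varrho>0$ small enough that the closed disk $|t|\le\varrho$ satisfies three conditions simultaneously: (i) $\varrho<\min(x,r)$, so that $1+t+zt^{3}$ has no zeros in the disk; (ii) $\Re(x-t)>0$ on the disk (automatic from $\varrho<x$); and (iii) $\Re(t^{2}+xt+r^{2})>0$ on the disk, which holds for $\varrho$ sufficiently small since $t^{2}+xt+r^{2}$ equals $r^{2}>0$ at $t=0$. Such a $\varrho$ exists because each of these is an open condition satisfied at $t=0$.

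Next, since $(1+t+zt^{3})^{-\alpha}$ (defined via the principal branch) is holomorphic on this disk and equals $\sum_{m\ge0}P_{m}^{(\alpha)}(z)t^{m}$ there by (\ref{eq:Pmgen}), Cauchy's Integral Formula for Taylor coefficients gives
\[
P_{m}^{(\alpha)}(z)=\frac{1}{2\pi i}\oint_{|t|=\varrho}\frac{dt}{(1+t+zt^{3})^{\alpha}\,t^{m+1}}.
\]
It then remains to identify this integrand with $G_{m}(z,t)$. The polynomial identity $1+t+zt^{3}=-z(x-t)(t^{2}+xt+r^{2})$ is immediate from the factorization over the roots. On the disk, the three factors $-z$, $x-t$, and $t^{2}+xt+r^{2}$ all lie in the right half plane, so each has principal argument in $(-\pi/2,\pi/2)$; therefore the sum of their principal arguments lies in $(-3\pi/2,3\pi/2)$, and in fact (since each factor is close to a positive real, continuously as $\varrho\to0$) one can ensure the sum lies in $(-\pi,\pi)$. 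Consequently the principal branch multiplies correctly:
\[
(1+t+zt^{3})^{\alpha}=(-z)^{\alpha}(x-t)^{\alpha}(t^{2}+xt+r^{2})^{\alpha},
\]
so the integrand equals $G_{m}(z,t)$ as defined in (\ref{Gm}).

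The main (minor) obstacle is the branch-cut bookkeeping in the last step: one must verify that the principal branch of $(ABC)^{\alpha}$ agrees with $A^{\alpha}B^{\alpha}C^{\alpha}$ on the contour, not just at $t=0$. This is handled either by the argument inclusion above, or alternatively by analytic continuation: both sides are holomorphic on the simply connected disk, and they agree at $t=0$ (where everything reduces to positive real numbers), hence they agree throughout by the identity theorem. Once this is established, the formula in the lemma follows immediately.
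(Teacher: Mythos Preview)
Your proposal is correct and follows the same approach as the paper: apply the Cauchy Integral Formula to the generating function and then identify the integrand with $G_m(z,t)$ via the factorization $1+t+zt^3=-z(x-t)(t^2+xt+r^2)$, using that all three factors are positive reals at $t=0$ so the principal branches multiply correctly for small $t$. The paper dispatches this in one sentence (``using the principal cut, we have for small $t$\ldots''), whereas you spell out the branch-cut verification via the argument bound and the identity-theorem alternative; your write-up is simply a more detailed version of the same argument.
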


The integral in this lemma is taken along a sufficiently small loop
around the origin. To deform this contour, we locate the singularities
of $G_{m}(z,t)$ (as a complex function of $t$).

\begin{lemma} Let $x$, $y$, $r$, and $\theta$ be as in Lemma
\ref{rootprops}. For any $\theta\in(2\pi/3,\pi)$, $t^{2}+xt+r^{2}$
is a negative real number if and only if $t=re^{\pm i\theta}\pm is$
for some positive real number $s$. \end{lemma}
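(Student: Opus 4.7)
My plan is to exploit the factorization coming from Vieta's formulas and then reduce the claim to a routine real/imaginary-part computation. By Lemma \ref{rootprops} the numbers $re^{\pm i\theta}$ are precisely the non-real roots of $1+t+zt^{3}$, and the relation $x=-2r\cos\theta$ (which follows from $x=qr$ and $q=-2\cos\theta$) gives
\[
t^{2}+xt+r^{2}=(t-re^{i\theta})(t-re^{-i\theta})=t^{2}-2rt\cos\theta+r^{2}.
\]
So the question is to determine for which $t\in\mathbb{C}$ this quadratic is a negative real number.

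Writing $t=u+iv$ with $u,v\in\mathbb{R}$ and expanding, I would obtain
\[
\operatorname{Re}(t^{2}+xt+r^{2})=(u-r\cos\theta)^{2}-v^{2}+r^{2}\sin^{2}\theta,\qquad \operatorname{Im}(t^{2}+xt+r^{2})=2v(u-r\cos\theta).
\]
Requiring the imaginary part to vanish forces either $v=0$ or $u=r\cos\theta$. In the first case, the real part becomes $(u-r\cos\theta)^{2}+r^{2}\sin^{2}\theta$, which is strictly positive since $\sin\theta\neq 0$ for $\theta\in(2\pi/3,\pi)$; so no negative real value is produced. In the second case, the real part reduces to $r^{2}\sin^{2}\theta-v^{2}$, which is negative precisely when $|v|>r\sin\theta$ (using $\sin\theta>0$).

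The final step is to rewrite the condition $u=r\cos\theta$ with $|v|>r\sin\theta$ in the asserted form. If $v>r\sin\theta$, set $s=v-r\sin\theta>0$ to obtain $t=r\cos\theta+i(r\sin\theta+s)=re^{i\theta}+is$; if $v<-r\sin\theta$, set $s=-v-r\sin\theta>0$ to obtain $t=r\cos\theta-i(r\sin\theta+s)=re^{-i\theta}-is$. Conversely, each such $t$ lies on one of the two vertical rays $\{r\cos\theta+iv:|v|>r\sin\theta\}$, and the computation above confirms that $t^{2}+xt+r^{2}$ is then the negative real number $-s^{2}-2rs\sin\theta$. There is no real obstacle in this argument; the only mildly delicate point is being careful with signs so that the parameter $s$ ends up positive and the two sign choices in $re^{\pm i\theta}\pm is$ correspond to the two rays emanating from $re^{\pm i\theta}$ away from the real axis.
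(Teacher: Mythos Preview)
Your proof is correct and follows essentially the same approach as the paper: both arguments compute the real and imaginary parts of $t^{2}+xt+r^{2}$, set the imaginary part to zero, and then determine when the real part is negative. The only difference is that you use Cartesian coordinates $t=u+iv$ whereas the paper uses polar coordinates $t=\rho e^{i\phi}$; your choice makes the condition $u=r\cos\theta$ (a vertical line) appear more directly and avoids the small case analysis the paper does for $\phi\in\{0,\pi\}$.
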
 
\begin{proof}
If $t=re^{\pm i\theta}\pm is$ for some positive $s\in\R$, then 
\begin{align*}
t^{2}-(2r\cos\theta)t+r^{2}= & r^{2}e^{\pm2i\theta}\pm2irse^{\pm i\theta}-s^{2}-(2r^{2}\cos\theta)e^{\pm i\theta}\mp2irs\cos\theta+r^{2}\\
= & (re^{\pm i\theta})^{2}-(2r^{2}\cos\theta)e^{\pm i\theta}+r^{2}-s^{2}\pm2irs(e^{\pm i\theta}-\cos\theta)
\end{align*}
where the sum of the first three terms in the last expression is 
\begin{align*}
(re^{\pm i\theta})^{2}-(2r^{2}\cos\theta)e^{\pm i\theta}+r^{2}= & r^{2}(\cos(2\theta)\pm i\sin(2\theta))\\
 & -2r^{2}(\cos^{2}\theta\pm i\cos\theta\sin\theta)+r^{2}\\
= & r^{2}(2\cos^{2}\theta-1\pm2i\sin\theta\cos\theta)\\
 & -2r^{2}(\cos^{2}\theta\pm i\cos\theta\sin\theta)+r^{2}\\
= & 0,
\end{align*}
and the sum of the last two terms is 
\begin{align*}
-s^{2}\pm2irs(e^{\pm i\theta}-\cos\theta) & =-s^{2}\pm2irs(\pm i\sin\theta)\\
 & =-s^{2}-2rs\sin\theta.
\end{align*}
Hence 
\[
t^{2}-(2r\cos\theta)t+r^{2}=-(s^{2}+2rs\sin\theta)<0
\]
since $\theta\in(\frac{\pi}{2},\pi)$.

Conversely, suppose $t^{2}-(2r\cos\theta)t+r^{2}$ is a negative real
number and $t=\rho e^{i\phi}$ for some $\rho>0$ and $\phi\in(-\pi,\pi]$.
If $\phi=\pi$ or $\phi=0$, then $t$ is real, from which we deduce
that $t^{2}-(2r\cos\theta)t+r^{2}$ is a positive real number (indeed,
$t^{2}-(2r\cos\theta)t+r^{2}$ attains its minimum value, $r^{2}(1-\cos\theta)$,
at $t=r\cos\theta$). So assume $\phi\in(-\pi,\pi)$ and $\phi\neq0$.
Then 
\[
0=\Im\left(t^{2}-(2r\cos\theta)t+r^{2}\right)=\rho^{2}\sin(2\phi)-\rho(2r\cos\theta)\sin(\phi),
\]
which implies
\[
2r\cos\theta=\frac{\rho\sin(2\phi)}{\sin\phi},
\]
wherefore 
\[
r\cos\theta=\rho\cos\phi,
\]
and hence 
\[
\Re\left(re^{i\theta}\right)=\Re\left(t\right).
\]
On the other hand, 
\[
\Re\left(t^{2}-(2r\cos\theta)t+r^{2}\right)<0,
\]
which implies 
\[
\rho^{2}\cos(2\phi)-\rho(2r\cos\theta)\cos(\phi)+r^{2}<0,
\]
whereby 
\[
\rho^{2}(2\cos^{2}\phi-1)-2(r\cos\theta)(\rho\cos\phi)+r^{2}<0,
\]
and thus 
\[
2\rho^{2}\cos^{2}\phi-2(\rho\cos\phi)(\rho\cos\phi)+r^{2}-\rho^{2}<0.
\]
This implies $r^{2}<\rho^{2}$ or equivalently $r<\rho$. Therefore,
$t^{2}-(2r\cos\theta)t+r^{2}$ is a negative real number if and only
if $t=re^{\pm i\theta}+is$ for some $s>0$. 
\end{proof}
By the previous result and the fact that $x-t$ is a negative real
number if and only if $t$ is a real number greater than $x$, we
obtain the following theorem. \begin{theorem} Let $G_{m}$ be defined
as in (\ref{Gm}), and let $x$, $y$, $r$, and $\theta$ be as in
Lemma \ref{rootprops}. Then with the principal cut, $G_{m}(z,t)$
is analytic on $\C\setminus(\{0\}\cup H\cup V_{+}\cup V_{-})$, where
\begin{align*}
H & :=\{x+t:t\geq0\}\\
V_{+} & :=\{y+it:t\geq0\}\\
V_{-} & :=\{\overline{y}-it:t\geq0\}.
\end{align*}
\end{theorem}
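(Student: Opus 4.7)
The plan is to decompose $G_m(z,t)$ into its four factors and to determine the maximal domain of analyticity of each, under the principal branch convention (which places the branch cut of $w^\alpha$ on $(-\infty,0]$). The intersection of these four domains in the $t$-variable will then be the claimed set.

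First I would dispose of the trivial factors: $(-z)^\alpha$ is a nonzero constant in $t$ and contributes no obstruction, while $t^{-(m+1)}$ is meromorphic in $t$ with a single singularity at the origin. Next, for $(x-t)^\alpha$ with the principal cut, I would observe that $w^\alpha$ fails to be analytic exactly on $(-\infty,0]$; pulling back through $w=x-t$ shows that this factor fails to be analytic exactly on $\{t\in\mathbb{R}:t\geq x\}=H$.

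The substantive factor is $(t^2+xt+r^2)^\alpha$, for which I would invoke the preceding lemma: the quadratic $t^2+xt+r^2$ takes negative real values precisely on $\{re^{\pm i\theta}\pm is:s>0\}$, which is $(V_+\cup V_-)\setminus\{y,\bar{y}\}$. Adjoining the two zeros $y,\bar{y}$ of the quadratic (at which $w^\alpha$ is also non-analytic because $w=0$ lies on the cut), the non-analytic locus of this factor is exactly $V_+\cup V_-$. Intersecting the four domains of analyticity then yields analyticity of $G_m(z,t)$ on $\mathbb{C}\setminus(\{0\}\cup H\cup V_+\cup V_-)$.

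Since the geometric content — identifying the locus where $t^2+xt+r^2$ is a non-positive real as the two vertical rays $V_\pm$ — has already been carried out in the previous lemma, no substantial obstacle remains. The only bookkeeping subtlety is that the endpoints $x$, $y$, $\bar{y}$ of the three rays are automatically included in the singular set because $w^\alpha$ is not analytic at $w=0$, so the vanishing points of $x-t$ and $t^2+xt+r^2$ need not be treated separately.
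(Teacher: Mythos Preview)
Your proposal is correct and follows the same approach as the paper: the theorem is obtained directly from the preceding lemma (which identifies the locus where the quadratic factor is a negative real) together with the elementary observation that $x-t$ is a nonpositive real exactly on $H$. Your write-up is simply a more explicit version of the paper's one-line deduction.
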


\begin{figure}
\centering \includegraphics[scale=0.25]{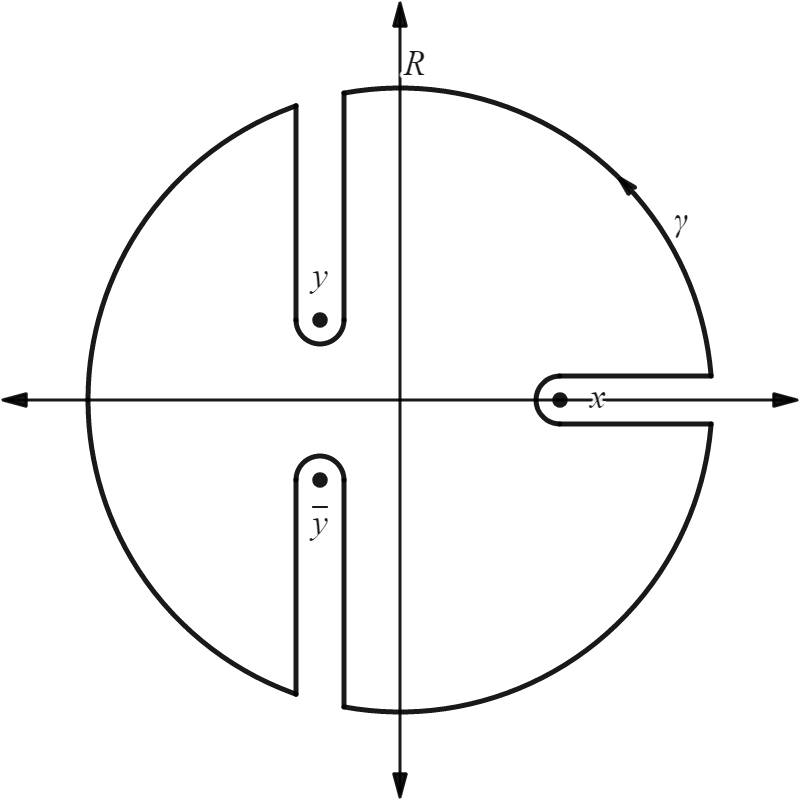} \caption{The contour $\gamma$}
\label{graph} 
\end{figure}

Now we deform our loop $|t|=\varrho$ into a new contour $\gamma$
by increasing its radius and hugging the cuts in the plane by some
small fixed margin. The contour $\gamma$ is plotted in Figure \ref{graph}.
More concretely, let $\varepsilon$ and $R$ be positive real numbers
with $\varrho<R$, and for any set of complex numbers $S$, let $d(\tau,S)$
denote the distance from $\tau$ to $S$, i.e., 
\[
d(\tau,S):=\inf_{s\in S}|\tau-s|.
\]
Define the set 
\[
D:=\{\tau\in\C:d(\tau,H\cup V_{+}\cup V_{-})=\varepsilon\}\cup\{\tau\in\C:|\tau|=R\}
\]
Then for small $\varepsilon$ the connected component of $\C\setminus D$
containing $0$ also contains the loop $|t|=\varrho$. The very same
connected component is simply connected, so its boundary $\gamma$
is homotopic to the loop $|t|=\varrho$. Thus, to compute the integral
in Lemma \ref{int1}, we can integrate along $\gamma$ instead of
the loop $|t|=\varrho$. The next result shows that as $R\to\infty$,
the portion of $\gamma$ coinciding with the circle of radius $R$
about the origin becomes negligible.

\begin{lemma} Let $G_{m}$ be defined as in (\ref{Gm}), and let
$x$, $y$, $r$, and $\theta$ be as in Lemma \ref{rootprops}. Define
$\sigma_{R}(t)=Re^{it}$ for $t\in[a,b]$, where $-\pi\leq a<b\leq\pi$
and $\theta,0,-\theta\not\in[a,b]$. Then 
\[
\lim_{R\to\infty}\int_{\sigma_{R}}G_{m}(z,t)dt=0.
\]
\end{lemma}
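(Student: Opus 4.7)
The plan is to apply the ML-inequality directly, using the polynomial decay of $G_m(z,t)$ as $|t|\to\infty$. Since $\sigma_R$ is an arc of the circle of radius $R$, its length is at most $(b-a)R\leq 2\pi R$, so it will suffice to show that the maximum of $|G_m(z,t)|$ on $\sigma_R$ decays faster than $1/R$.

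First I would estimate $|G_m(z,t)|$ uniformly on $|t|=R$ for large $R$. By the reverse triangle inequality one has
\[
|x-t|\geq R-x>0 \qquad\text{and}\qquad |t^2+xt+r^2|\geq R^2-xR-r^2>0
\]
once $R$ is large enough, together with $|t^{m+1}|=R^{m+1}$. Because the $\alpha$-powers in the definition of $G_m$ are taken with the principal cut, the identity $|w^\alpha|=|w|^\alpha$ holds for every nonzero $w\in\C$, so
\[
|G_m(z,t)|\leq \frac{1}{(-z)^\alpha (R-x)^\alpha (R^2-xR-r^2)^\alpha R^{m+1}}
\]
uniformly in $t\in\sigma_R$.

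The ML-inequality then yields
\[
\left|\int_{\sigma_R} G_m(z,t)\,dt\right|\leq \frac{2\pi R}{(-z)^\alpha (R-x)^\alpha (R^2-xR-r^2)^\alpha R^{m+1}},
\]
and the right-hand side is asymptotic to $2\pi(-z)^{-\alpha} R^{-(3\alpha+m)}$ as $R\to\infty$. Since $\alpha>0$ and $m\geq 0$, the exponent $3\alpha+m$ is strictly positive, so this upper bound tends to $0$, and so does the integral.

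There is essentially no obstacle in this argument; the only subtle point worth flagging is the modulus identity $|w^\alpha|=|w|^\alpha$, which follows from $w^\alpha=e^{\alpha\Log w}$ and $\Re\Log w=\log|w|$. The angle conditions $\theta,0,-\theta\notin[a,b]$ play no role in the decay estimate itself; they are only present to guarantee that $\sigma_R$ stays inside the domain of analyticity of $G_m$ in a manner consistent with the subsequent contour deformation.
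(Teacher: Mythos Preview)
Your argument is correct and is essentially identical to the paper's own proof: both use the reverse triangle inequality to bound $|x-t|$ and $|t^2+xt+r^2|$ from below on $|t|=R$, then apply the ML-estimate with arc length at most $2\pi R$. Your additional remarks on the identity $|w^\alpha|=|w|^\alpha$ and on the role of the angle restrictions are accurate and slightly more explicit than the paper, but the core approach is the same.
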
 
\begin{proof}
We have 
\[
|G_{m}(z,t)|=\frac{1}{(-z)^{\alpha}|x-t|^{\alpha}|t^{2}+xt+r^{2}|^{\alpha}|t|^{m+1}}.
\]
For $|t|=R$ and $R$ sufficiently large we have 
\[
|t^{2}+xt+r^{2}|\geq R^{2}-xR-r^{2}
\]
and 
\[
|x-t|\geq R-x
\]
by the reverse triangle inequality. Consequently, 
\[
|G_{m}(z,t)|\leq\frac{1}{(-z)^{\alpha}(R-x)^{\alpha}(R^{2}-xR-r^{2})^{\alpha}R^{m+1}}.
\]
The length of $\sigma$ is no more than $2\pi R$, therefore 
\[
\int_{\sigma_{R}}|G_{m}(z,t)||dt|\leq\frac{2\pi R}{(-z)^{\alpha}|x-R|^{\alpha}R^{2\alpha}R^{m+1}}.
\]
Because 
\[
\lim_{R\to\infty}\frac{2\pi R}{(-z)^{\alpha}|x-R|^{\alpha}R^{2\alpha}R^{m+1}}=0,
\]
our desired conclusion follows. 
\end{proof}
Thus, as $R\rightarrow\infty$, we can disregard the larger circular
component of $\gamma$, and the parts of $\gamma$ coinciding with
line segments become rays. Similarly, the next result shows that we
can disregard the smaller semi-circular portions as well as $\varepsilon\rightarrow0$.

\begin{lemma} Let $\alpha\in(0,1)$ and $G_{m}$ be defined as in
(\ref{Gm}), and let $x$, $y$, $r$, and $\theta$ be as in Theorem
\ref{rootprops}. For small $\varepsilon>0$ define $\sigma_{1}:[\frac{\pi}{2},\frac{3\pi}{2}]\to\C$
by 
\[
\sigma_{1}(s)=\varepsilon e^{is}+x,
\]
and $\sigma_{2}:[\pi,2\pi]\to\C$ by 
\[
\sigma_{2}(s)=\varepsilon e^{is}+y,
\]
and define $\sigma_{3}:[0,\pi]\to\C$ by 
\[
\sigma_{3}(s)=\varepsilon e^{is}+\overline{y}.
\]
Then 
\[
\lim_{\varepsilon\to0}\int_{\sigma_{j}}G_{m}(z,t)dt=0
\]
for $j=1,2,3$. \end{lemma}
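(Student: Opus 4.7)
The plan is a straightforward application of the ML-inequality. The point is that each of $\sigma_1, \sigma_2, \sigma_3$ is a small circle (of radius $\varepsilon$ and thus length $\pi\varepsilon$) centered at one of the branch points $x$, $y$, $\overline{y}$ of $G_m(z,t)$. Near each branch point the singular factor in $G_m$ contributes at most $\varepsilon^{-\alpha}$, while all remaining factors are bounded on a small compact neighborhood of that branch point. Consequently each integral is of size $O(\varepsilon^{1-\alpha})$, and since $\alpha\in(0,1)$ this tends to $0$ as $\varepsilon\to0$.

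Concretely, for $\sigma_1$ parametrized around $x$, we have $|x-t|=\varepsilon$ on the contour, so $|(x-t)^{\alpha}|=\varepsilon^{\alpha}$. The remaining factors $(-z)^{\alpha}$, $(t^{2}+xt+r^{2})^{\alpha}$, and $t^{m+1}$ are continuous and nonzero in a neighborhood of $x$, because $x$ is distinct from $0$, $y$, and $\overline{y}$ (recall $x>0$ is the real root and $y,\overline{y}$ are the conjugate non-real roots, per Lemma \ref{rootprops}). Hence there is a constant $M_{1}>0$ such that
\[
|G_{m}(z,t)|\leq\frac{M_{1}}{\varepsilon^{\alpha}}\qquad\text{for all }t\in\sigma_{1}([\tfrac{\pi}{2},\tfrac{3\pi}{2}]).
\]
The ML-inequality then gives
\[
\left|\int_{\sigma_{1}}G_{m}(z,t)\,dt\right|\leq M_{1}\pi\varepsilon^{1-\alpha}\longrightarrow0.
\]

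The same argument handles $\sigma_2$ and $\sigma_3$, once one rewrites the middle factor via the factorization $t^{2}+xt+r^{2}=(t-y)(t-\overline{y})$. On $\sigma_2$ we have $|t-y|=\varepsilon$ while $|t-\overline{y}|$ stays bounded below (since $y\neq\overline{y}$), so $|(t^{2}+xt+r^{2})^{\alpha}|\asymp\varepsilon^{\alpha}$, and the factors $(x-t)^{\alpha}$ and $t^{m+1}$ are bounded away from $0$ and $\infty$ near $y$. The same conclusion applies to $\sigma_3$ by symmetry.

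The only subtle point is verifying that the other factors really do remain bounded on each small semicircular arc; this reduces to noting that the three branch points $x$, $y$, $\overline{y}$ are pairwise distinct and all distinct from the origin, which was established in Lemma \ref{rootprops} (where it was shown $x>0$ and $\theta\in(2\pi/3,\pi)$, so $\Im y=r\sin\theta>0$). Given this separation, choosing $\varepsilon$ smaller than half the minimum pairwise distance among $\{0,x,y,\overline{y}\}$ makes the boundedness uniform, and the $\varepsilon^{1-\alpha}$ decay completes the argument.
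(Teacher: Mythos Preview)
Your proof is correct and follows essentially the same ML-inequality argument as the paper: the singular factor contributes $\varepsilon^{-\alpha}$, the remaining factors are bounded near each branch point, and the arc length $\pi\varepsilon$ yields an $O(\varepsilon^{1-\alpha})$ bound. The only minor difference is that the paper estimates the quadratic factor near $y$ by an explicit expansion of $(\sigma_2(s)-r\cos\theta)^2+(r\sin\theta)^2$, whereas you more cleanly invoke the factorization $t^2+xt+r^2=(t-y)(t-\overline{y})$ together with $|w^\alpha|=|w|^\alpha$.
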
 
\begin{proof}
Since 
\[
|\sigma_{1}(s)|\leq x-\varepsilon
\]
for all $s\in[\frac{\pi}{2},\frac{3\pi}{2}]$, we have 
\[
\frac{1}{|\sigma_{1}(s)|^{m+1}}\leq\frac{1}{(x-\varepsilon)^{m+1}}.
\]
As the function $t\mapsto t^{2}-(2r\cos\theta)t+r^{2}$ has no zeros
in the disc $|t-x|\leq\varepsilon$, there is a positive real number
$M$ such that for any $\varepsilon$ sufficiently small and $s\in[\frac{\pi}{2},\frac{3\pi}{2}]$
\[
\frac{1}{|(\sigma_{1}(s)^{2}-(2r\cos\theta)\sigma_{1}(s)+r^{2})^{\alpha}|}\leq M.
\]
Thus, for $\varepsilon$ sufficiently small, 
\[
\int_{\sigma_{1}}|G_{m}(z,t)||dt|\leq\pi\varepsilon\cdot\frac{M}{(-z)^{\alpha}\varepsilon^{\alpha}(x-\varepsilon)^{m+1}}=\frac{\pi M\varepsilon^{1-\alpha}}{(-z)^{\alpha}(x-\varepsilon)^{m+1}},
\]
from which we deduce that 
\[
\lim_{\varepsilon\to0}\int_{\sigma_{1}}G_{m}(z,t)dt=0
\]
for any $\alpha\in(0,1)$.

For the integral along $\sigma_{2}$, note that 
\[
\sigma_{2}(s)=r\cos\theta+ir\sin\theta+\varepsilon e^{is}.
\]
We apply the identity 
\[
t^{2}-(2r\cos\theta)t+r^{2}=(t-r\cos\theta)^{2}+(r\sin\theta)^{2},
\]
to conclude 
\begin{align*}
\sigma_{2}(s)^{2}-(2r\cos\theta)\sigma_{2}(s)+r^{2} & =(ir\sin\theta+\varepsilon e^{is})^{2}+(r\sin\theta)^{2}\\
 & =2i\varepsilon(\sin\theta)e^{is}+\varepsilon^{2}e^{2is}.
\end{align*}
By the reverse triangle inequality, for sufficiently small $\varepsilon$
\[
2\varepsilon s\sin\theta-\varepsilon^{2}\leq|2i\varepsilon(\sin\theta)e^{is}+\varepsilon^{2}e^{2is}|,
\]
and consequently 
\[
\frac{1}{|(\sigma_{2}(s)^{2}-(2r\cos\theta)\sigma_{2}(s)+r^{2})^{\alpha}|}\leq\frac{1}{\varepsilon^{\alpha}(2r\sin\theta-\varepsilon)^{\alpha}}
\]
for all $s\in[\pi,2\pi]$. The magnitude of any point on the image
of $\sigma_{2}$ exceeds $r-\varepsilon$, so 
\[
\frac{1}{|\sigma_{2}(s)^{m+1}|}\leq\frac{1}{(r-\varepsilon)^{m+1}}.
\]
As the function $t\mapsto x-t$ has no zeros in the disc $|t-y|\leq\varepsilon$,
there is a positive number $M_{2}$ such that for any $\varepsilon$
sufficiently small, 
\[
\frac{1}{|(x-\sigma_{2}(s))^{\alpha}|}\leq M_{2}.
\]
Thus, for $\varepsilon$ sufficiently small, 
\[
\begin{split}\int_{\sigma_{2}}|G_{m}(z,t)||dt| & \leq\pi\varepsilon\cdot\frac{M_{2}}{(-z)^{\alpha}\varepsilon^{\alpha}(2r\sin\theta-\varepsilon)^{\alpha}(r-\varepsilon)^{m+1}}\\
 & =\frac{\pi M_{2}\varepsilon^{1-\alpha}}{(-z)^{\alpha}(2r\sin\theta-\varepsilon)^{\alpha}(r-\varepsilon)^{m+1}}
\end{split}
\]
Therefore, for $\alpha\in(0,1)$, 
\[
\lim_{\varepsilon\to0}\int_{\sigma_{2}}G_{m}(z,t)dt=0.
\]

For the integral along $\sigma_{3}$, note that 
\[
\sigma_{3}(s)=\overline{\sigma_{2}(\pi-s)}.
\]
Thus, as a consequence of the previous equation, 
\[
\lim_{\varepsilon\to0}\int_{\sigma_{3}}G_{m}(z,t)dt=0
\]
whenever $\alpha\in(0,1)$. 
\end{proof}
As $R\rightarrow\infty$ and $\varepsilon\rightarrow0$, the six line
segments of $\gamma$ (see Figure \ref{graph}) become six rays. By
adding together the two integrals along the horizontal rays, we obtain
a real integral, as shown in the next lemma.

\begin{lemma} Let $x$, $y$, $r$, and $\theta$ be as in Lemma
\ref{rootprops}. For each small $\varepsilon>0$, define $\ell:[0,\infty)\to\C$
by 
\[
\ell(s)=x+s+i\varepsilon.
\]
Then $G_{m}(z,t)$, defined in \eqref{Gm}, satisfies 
\[
\begin{split} & \int_{\ell}G_{m}(z,t)dt-\int_{\overline{\ell}}G_{m}(z,t)dt\\
 & \to\int_{0}^{\infty}\frac{2i\sin(\alpha\pi)}{(-z)^{\alpha}t^{\alpha}(t^{2}+3xt+|x-y|^{2})^{\alpha}(x+t)^{m+1}}dt
\end{split}
\]
as $\varepsilon\to0$. \end{lemma}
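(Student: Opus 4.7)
The plan is to parameterize both rays by $s \in [0,\infty)$, compute the pointwise limits of the two integrands, observe that their difference produces the factor $2i\sin(\alpha\pi)$ from the jump of the principal branch of $w^\alpha$ across the cut $H$, and then pass the limit inside by dominated convergence.

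Along $\ell$, write $t = x + s + i\varepsilon$ so that $dt = ds$, and along $\overline{\ell}$, write $t = x + s - i\varepsilon$ so that $dt = ds$. On $\ell$, $x - t = -s - i\varepsilon$, whose principal argument tends to $-\pi$ for each $s > 0$ as $\varepsilon \to 0^+$; hence $(x-t)^\alpha \to s^\alpha e^{-i\pi\alpha}$ pointwise. On $\overline{\ell}$ the mirror argument gives $(x-t)^\alpha \to s^\alpha e^{i\pi\alpha}$. Evaluating the remaining factors at $t = x+s$ yields
\[
(x+s)^2 + x(x+s) + r^2 = s^2 + 3xs + (2x^2 + r^2) = s^2 + 3xs + |x-y|^2,
\]
where the last equality uses the identity $|x-y|^2 = 2x^2 + r^2$ from Lemma \ref{rootprops}, and $t^{m+1} \to (x+s)^{m+1}$. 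Subtracting the two pointwise limits produces the factor $e^{i\pi\alpha} - e^{-i\pi\alpha} = 2i\sin(\alpha\pi)$, giving precisely the integrand on the right side after relabeling $s$ as $t$.

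To justify interchanging the limit and the integral, I would exhibit a uniform integrable majorant on $(0, \varepsilon_0]$ for some $\varepsilon_0 > 0$. Near $s = 0$, $|x-t| = \sqrt{s^2 + \varepsilon^2} \ge s$ gives $|x-t|^{-\alpha} \le s^{-\alpha}$, which is integrable on $[0,1]$ since $\alpha < 1$; the remaining factors $|t^2 + xt + r^2|$ and $|t|$ stay uniformly bounded away from zero on a tubular neighborhood of $H \setminus \{x\}$ because $H$ is disjoint from $V_\pm$ and from $0$. For large $s$, the integrand decays like $s^{-(m+1+3\alpha)}$, integrable on $[1,\infty)$. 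Dominated convergence then yields the claim.

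The main obstacle is the careful bookkeeping near the branch point $s = 0$: one needs the majorant to be independent of $\varepsilon$, and must verify that the principal argument of $x - t$ approaches $\mp\pi$ consistently from the two sides of the cut so that the two pointwise limits are genuine complex conjugates whose subtraction produces $2i\sin(\alpha\pi)$ rather than a cancellation.
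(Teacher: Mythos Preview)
Your proposal is correct and follows essentially the same approach as the paper: compute the pointwise limits of $G_m$ along $\ell$ and $\overline{\ell}$, identify the jump $e^{i\pi\alpha}-e^{-i\pi\alpha}=2i\sin(\alpha\pi)$ from the principal branch of $(x-t)^{\alpha}$, simplify the quadratic factor via $|x-y|^2=2x^2+r^2$, and justify the interchange of limit and integral by dominated convergence with a majorant built from $|x-t|\ge s$ and a uniform lower bound on $|t^2+xt+r^2|$. The paper is slightly more explicit in bounding the quadratic factor (it computes $\Re(\ell(s)^2+x\ell(s)+r^2)=s^2+3xs+|x-y|^2-\varepsilon^2$ directly), but your geometric remark that $H$ is disjoint from $V_\pm$ and from $0$ amounts to the same thing.
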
 
\begin{proof}
Since $x=-2r\cos\theta$ (by Lemma \ref{rootprops}), we have 
\begin{align}
\lim_{\varepsilon\to0}(\ell(s))^{2}+x\ell(s)+r^{2}) & =\lim_{\epsilon\rightarrow0}((\ell(s)-r\cos\theta)^{2}+(r\sin\theta)^{2})\nonumber \\
 & =(s-3r\cos\theta)^{2}+(r\sin\theta)^{2}\nonumber \\
 & =s^{2}-6(r\cos\theta)s+9(r\cos\theta)^{2}+(r\sin\theta)^{2}\nonumber \\
 & =s^{2}+3xs+8r^{2}\cos^{2}\theta+r^{2}\nonumber \\
 & =s^{2}+3xs+|x-y|^{2}.\label{eq:limitquadratic}
\end{align}
Moreover, 
\[
x-\ell(s)=-s-i\varepsilon
\]
yields 
\[
\lim_{\varepsilon\to0}(x-\ell(s))^{\alpha}=s^{\alpha}e^{-i\pi\alpha},
\]
from which we deduce that 
\[
\lim_{\varepsilon\to0}G_{m}(z,\ell(s))=\frac{e^{i\pi\alpha}}{(-z)^{\alpha}s^{\alpha}(s^{2}+3xs+|x-y|^{2})^{\alpha}(x+s)^{m+1}},
\]
and likewise 
\[
\lim_{\varepsilon\to0}G_{m}(z,\overline{\gamma(t)})=\frac{e^{-i\pi\alpha}}{(-z)^{\alpha}s^{\alpha}(s^{2}+3xs+|x-y|^{2})^{\alpha}(x+s)^{m+1}}.
\]
From the same computations as those in (\ref{eq:limitquadratic}),
we have 
\begin{align*}
\left|\ell(s)^{2}+x\ell(s)+r^{2}\right| & \ge\Re\left(\ell(s)^{2}+x\ell(s)+r^{2}\right)\\
 & =s^{2}+3xs+|x-y|^{2}-\varepsilon^{2}.
\end{align*}
As $x>0$, we conclude that there is a constant (independent of $s$
and $\epsilon$) $M>0$ , such that for all sufficiently small $\varepsilon$
and $s\in(0,\infty)$ 
\[
\frac{1}{\left|\ell(s)^{2}+x\ell(s)+r^{2}\right|^{\alpha}}\le M
\]
from which the inequalities $|x-\ell(s)|\ge s$ and $|\ell(s)|\ge x+s$
give 
\[
|G_{m}(z,\ell(s))|=\frac{1}{|z|^{\alpha}|x-\ell(s)|^{\alpha}|\ell(s)^{2}+x\ell(s)+r^{2}|^{\alpha}|\ell(s)|^{m+1}}\le\frac{M}{|z|^{\alpha}s^{\alpha}(x+s)^{m+1}}
\]
where the last expression is integrable as a function in $s\in(0,\infty)$.
Thus, the Lebesgue's Dominated Convergence Theorem (see Chapter 4
of \cite{R}) implies 
\[
\lim_{\varepsilon\to0}\int_{\gamma}G_{m}(z,t)dt=\int_{0}^{\infty}\frac{e^{i\pi\alpha}}{(-z)^{\alpha}s^{\alpha}(s^{2}+3xs+|x-y|^{2})^{\alpha}(x+s)^{m+1}}ds
\]
and similarly 
\[
\lim_{\varepsilon\to0}\int_{\overline{\gamma}}G_{m}(z,t)dt=\int_{0}^{\infty}\frac{e^{-i\pi\alpha}}{(-z)^{\alpha}s^{\alpha}(s^{2}+3xs+|x-y|^{2})^{\alpha}(x+s)^{m+1}}ds.
\]
Hence, it follows that 
\[
\int_{\gamma}G_{m}(z,t)dt-\int_{\overline{\gamma}}G_{m}(z,t)dt\to\int_{0}^{\infty}\frac{2i\sin(\alpha\pi)}{(-z)^{\alpha}s^{\alpha}(s^{2}+3xs+|x-y|^{2})^{\alpha}(x+s)^{m+1}}dt
\]
as $\varepsilon\to0$. 
\end{proof}
In the next lemma, we handle the vertical rays of $\gamma$ (Figure
\ref{graph}) in a similar manner.

\begin{lemma} \label{vertical} Let $x$, $y$, $r$, and $\theta$
be as in Lemma \ref{rootprops}, $\varepsilon>0$, and define the
paths $\ell_{+},\ell_{-}:(0,\infty)\to\C$ by 
\begin{align*}
\ell_{+}(s) & =y+is+\varepsilon,\\
\ell_{-}(s) & =y+is-\varepsilon.
\end{align*}
Then 
\[
\begin{split} & \int_{\ell_{+}}G_{m}(z,t)dt-\int_{\ell_{-}}G_{m}(z,t)dt\\
 & \to\int_{0}^{\infty}\frac{2\sin(\alpha\pi)}{(-z)^{\alpha}(x-y-it)^{\alpha}(t^{2}+2(r\sin\theta)t)^{\alpha}(y+it)^{m+1}}dt
\end{split}
\]
as $\varepsilon\to0$. \end{lemma}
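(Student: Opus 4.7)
The plan is to mirror the argument used for the horizontal rays in the preceding lemma. The two essential ingredients are the pointwise limit of $G_m(z,\ell_\pm(s))$ as $\varepsilon\to 0^+$, which differs on the two sides of the cut only through a branch factor coming from $(t^2+xt+r^2)^\alpha$, and a dominating function independent of $\varepsilon$ so that Lebesgue's Dominated Convergence Theorem applies.

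For the pointwise limit, I would factor $t^2+xt+r^2=(t-y)(t-\overline{y})$ and evaluate at $t=y+is\pm\varepsilon$, obtaining
$$(is\pm\varepsilon)\bigl(i(2r\sin\theta+s)\pm\varepsilon\bigr).$$
Under the principal branch, each factor on $\ell_+$ lies in the open right half plane with positive imaginary part, so its argument tends to $\pi/2$ from below as $\varepsilon\to 0^+$. Their product therefore has argument tending to $\pi$ from below, which stays in $(-\pi,\pi]$, giving
$$\lim_{\varepsilon\to 0^+}(t^2+xt+r^2)^\alpha\big|_{t=\ell_+(s)}=s^\alpha(2r\sin\theta+s)^\alpha e^{i\pi\alpha}.$$
On $\ell_-$, each factor has argument tending to $\pi/2$ from above, so their sum exceeds $\pi$ and the principal branch of the product is reduced by $2\pi$; the limit is the same modulus times $e^{-i\pi\alpha}$. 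The remaining factors $(-z)^\alpha$, $(x-t)^\alpha$, and $t^{m+1}$ present no branch difficulty: $x-y-is$ has strictly negative imaginary part for every $s\ge 0$ because $r\sin\theta>0$, so it stays clear of the negative real axis, and $y+is$ never vanishes. These factors thus contribute the common limit $(x-y-is)^\alpha(y+is)^{m+1}$ on both $\ell_+$ and $\ell_-$. Together with $dt=i\,ds$ and the identity $i(e^{-i\pi\alpha}-e^{i\pi\alpha})=2\sin(\pi\alpha)$, this recovers the claimed integrand after the relabeling $s\mapsto t$.

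To justify the exchange of limit and integral, I would produce a dominating function from the bounds $|t-y|\ge s$, $|t-\overline{y}|\ge 2r\sin\theta+s$, and the observation that $|x-\ell_\pm(s)|$ and $|\ell_\pm(s)|$ are bounded below by positive constants depending only on the geometry of the roots (since $r\sin\theta>0$). The resulting majorant behaves like $Cs^{-\alpha}(2r\sin\theta+s)^{-\alpha}|y+is|^{-(m+1)}$, which is integrable on $(0,\infty)$ since $\alpha\in(0,1)$ tames the singularity at $s=0$ and the polynomial growth of $|y+is|^{m+1}$ handles the tail.

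The main obstacle is the careful bookkeeping of the argument jump of $(t^2+xt+r^2)^\alpha$ across $V_+$. The symmetry of the two factors $(is\pm\varepsilon)$ and $(i(2r\sin\theta+s)\pm\varepsilon)$ about the positive imaginary axis makes this tractable: both arguments approach $\pi/2$ from the same side, producing a doubled phase shift of $\pm\pi\alpha$ at the cut. Once this is in hand, the proof reduces to the same dominated convergence argument used in the analogous result for the horizontal rays.
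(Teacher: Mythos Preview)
Your proposal is correct and follows essentially the same approach as the paper: compute the pointwise limits of $G_m(z,\ell_\pm(s))$ by tracking the argument of $t^2+xt+r^2$ on either side of the cut $V_+$, then pass the limit under the integral via dominated convergence. The only cosmetic difference is that you factor $t^2+xt+r^2=(t-y)(t-\overline{y})$ to read off the phase, whereas the paper completes the square to write $\ell_\pm(s)^2+x\ell_\pm(s)+r^2=-(s^2+2(r\sin\theta)s-\varepsilon^2)\pm 2i\varepsilon(s+r\sin\theta)$ and reads off the argument directly; both routes yield the same $e^{\mp i\pi\alpha}$ branch factors and the same integrable majorant.
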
 
\begin{proof}
First of all, since 
\[
y=r\cos\theta+ir\sin\theta,
\]
we have 
\begin{align*}
(\ell_{+}(s)-r\cos\theta)^{2} & =(i(s+r\sin\theta)+\varepsilon)^{2}\\
 & =-s^{2}-2(r\sin\theta)s-(r\sin\theta)^{2}+2i\varepsilon(s+r\sin\theta)+\varepsilon^{2}.
\end{align*}
Consequently 
\begin{align}
\ell_{+}(s)^{2}+x\ell_{+}(s)+r^{2} & =(\ell_{+}(s)-r\cos\theta)^{2}+(r\sin\theta)^{2}\nonumber \\
 & =-(s^{2}+2(r\sin\theta)s-\varepsilon^{2})+2i\varepsilon(s+r\sin\theta).\label{eq:lplus}
\end{align}
Since the real part of the last expression is negative for sufficiently
small $\varepsilon$ and the imaginary part is positive 
\[
\lim_{\varepsilon\to0}\Arg(\ell_{+}(s)^{2}+x\ell_{+}(s)+r^{2})=\pi
\]
from which we deduce that 
\[
\lim_{\varepsilon\to0}(\ell_{+}(s)^{2}+x\ell_{+}(s)+r^{2})^{\alpha}=(s^{2}+2(r\sin\theta)s)^{\alpha}e^{i\alpha\pi},
\]
and consequently 
\[
\lim_{\varepsilon\to0}G_{m}(z,\ell_{+}(s))=\frac{e^{-i\alpha\pi}}{(-z)^{\alpha}(x-y-is)^{\alpha}(s^{2}+2(r\sin\theta)s)^{\alpha}(y+is)^{m+1}}.
\]
From similar computations, we have 
\[
\lim_{\varepsilon\to0}G_{m}(z,\ell_{-}(s))=\frac{e^{i\alpha\pi}}{(-z)^{\alpha}(x-y-is)^{\alpha}(s^{2}+2(r\sin\theta)s)^{\alpha}(y+is)^{m+1}}.
\]
To swap the integral of $G_{m}(z,\ell_{+}(s))$ and the limit as $\varepsilon\rightarrow0$,
we note from (\ref{eq:lplus}) that for $s>0$ and sufficiently small
$\epsilon$ 
\[
\frac{1}{|\ell_{+}(s)^{2}+x\ell_{+}(s)+r^{2}|}\le\frac{1}{s^{2}+2rs\sin\theta-\epsilon^{2}}\le\frac{1}{2rs\sin\theta}.
\]
As $x\ne y+is$ for all $s\in(0,\infty)$, there is a constant (independent
of $s$ and $\varepsilon$) $M>0$ so that for all sufficiently small
$\varepsilon$ 
\[
\frac{1}{|x-\ell_{+}(s)|}=\frac{1}{|x-y-is-\varepsilon|}\le M
\]
from which and 
\[
|\ell_{+}(s)|\ge\sqrt{y^{2}+s^{2}},
\]
we conclude that 
\[
|G_{m}(z,\ell_{+}(s))|\le\frac{M}{(-z)^{\alpha}(2rs\sin\theta)^{\alpha}\sqrt{y^{2}+s^{2}}^{m+1}}.
\]
Since the right side is integrable as a function in $s\in(0,\infty)$
as $0<\alpha<1$, the Dominated Convergence Theorem gives 
\begin{align*}
\lim_{\varepsilon\to0}\int_{\ell_{+}}G_{m}(z,t)dt & =\lim_{\varepsilon\rightarrow0}i\int_{0}^{\infty}G_{m}(z,\ell_{+}(s))ds\\
 & =i\int_{0}^{\infty}\frac{e^{-i\alpha\pi}}{(-z)^{\alpha}(x-y-is)^{\alpha}(s^{2}+2(r\sin\theta)s)^{\alpha}(y+is)^{m+1}}ds.
\end{align*}
Applying similar arguments we have 
\begin{align*}
\lim_{\varepsilon\to0}\int_{\ell_{-}}G_{m}(z,t)dt & =\lim_{\varepsilon\rightarrow0}i\int_{0}^{\infty}G_{m}(z,\ell_{-}(s))ds\\
 & =i\int_{0}^{\infty}\frac{e^{i\alpha\pi}}{(-z)^{\alpha}(x-y-is)^{\alpha}(s^{2}+2(r\sin\theta)s)^{\alpha}(y+is)^{m+1}}ds
\end{align*}
from which the lemma follows. 
\end{proof}
Finally, from Lemmas \ref{int1} through \ref{vertical} we arrive
at the following representation of $P_{m}^{(\alpha)}(z)$ as the difference
of two integrals.

\begin{lemma}
   \label{intrep} Let $x$, $y$, $r$, and $\theta$
be as in Lemma \ref{rootprops}. Define 
\begin{equation}
A_{m}(t,\theta):=\frac{1}{t^{\alpha}(t^{2}+3xt+|x-y|^{2})^{\alpha}(x+t)^{m+1}}\label{eq:Amdef}
\end{equation}
and 
\begin{equation}
B_{m}(t,\theta):=\frac{1}{(x-y-it)^{\alpha}(t^{2}+2(r\sin\theta)t)^{\alpha}(y+it)^{m+1}}.\label{eq:Bmdef}
\end{equation}
Then 
\[
P_{m}^{(\alpha)}(z(\theta))=\frac{\sin(\alpha\pi)}{\pi(-z)^{\alpha}}\left(\int_{0}^{\infty}A_{m}(t,\theta)dt-2\Im{\int_{0}^{\infty}B_{m}(t,\theta))dt}\right).
\]
\end{lemma}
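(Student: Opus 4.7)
The plan is to combine the integral representation in Lemma~\ref{int1} with all the contour-deformation and ray-evaluation lemmas just proved. Starting from $P_{m}^{(\alpha)}(z)=\tfrac{1}{2\pi i}\oint_{|t|=\varrho}G_{m}(z,t)\,dt$, I would deform the small loop outward to the contour $\gamma$. Both loops wind once around $0$ and lie in the region where $G_m$ is analytic, so Cauchy's theorem gives $\int_\gamma G_m(z,t)\,dt = 2\pi i\, P_m^{(\alpha)}(z)$. I then decompose $\gamma$ into the outer arcs on $|t|=R$, the three small semicircles around $x$, $y$, $\bar y$, and the six line segments hugging the two sides of each of the three cuts. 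The lemma on $\sigma_R$ kills the outer arc contribution as $R\to\infty$, and the lemma on $\sigma_1,\sigma_2,\sigma_3$ kills the three small semicircles as $\varepsilon\to 0$, leaving only the six ray integrals.

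The main step is orientation bookkeeping: $\gamma$ bounds the simply connected domain containing $0$ with that domain on the left, which means each tube is traversed clockwise as seen from outside. Applying this convention to the $H$ cut, the top side is traversed in the direction of $\ell$ and the bottom side opposite to $\bar\ell$, producing $\int_\ell G_m\,dt - \int_{\bar\ell} G_m\,dt$, whose limit by the horizontal ray lemma is $\tfrac{2i\sin(\alpha\pi)}{(-z)^\alpha}\int_0^\infty A_m(t,\theta)\,dt$. For the $V_+$ cut the same convention yields $-\bigl(\int_{\ell_+}G_m\,dt - \int_{\ell_-}G_m\,dt\bigr)$, whose limit via Lemma~\ref{vertical} is $-\tfrac{2\sin(\alpha\pi)}{(-z)^\alpha}\int_0^\infty B_m(t,\theta)\,dt$.

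For the $V_-$ cut I would exploit the symmetry $G_m(z,\bar t)=\overline{G_m(z,t)}$, which holds for real $z$ because the integration rays sit on the appropriate side of each branch cut. Since complex conjugation reverses the orientation of a vertical contour, the $V_-$ contribution equals $-\overline{(V_+\text{ contribution})}$. Consequently $V_+ + V_- = V_+ - \overline{V_+} = 2i\,\Im V_+ = -\tfrac{4i\sin(\alpha\pi)}{(-z)^\alpha}\Im\int_0^\infty B_m(t,\theta)\,dt$. Summing with the $H$ contribution and dividing by $2\pi i$ yields the claimed identity.

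The main obstacle is precisely this orientation bookkeeping: the signs and factors of $i$ coming out of the six ray pieces must align so that the final expression is real-valued, consistent with $P_m^{(\alpha)}(z)\in\R$ for real $z$. Everything else is a routine application of the lemmas already established in this section.
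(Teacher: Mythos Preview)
Your proposal is correct and follows exactly the route the paper intends: the paper states this lemma as an immediate consequence of Lemmas~\ref{int1} through~\ref{vertical} without spelling out the assembly, and your orientation bookkeeping together with the conjugation argument for the $V_-$ tube is precisely how those pieces fit together. The signs you obtain for the three tube contributions are correct and yield the stated formula after division by $2\pi i$.
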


\section{Asymptotic Behavior}

\label{asymp} Now we study the zero distribution of the polynomials
$\Poly{m}(z)$ for large $m$ by examining the behavior of the integrals
$\int_{0}^{\infty}A_{m}(t,\theta)dt$ and $\int_{0}^{\infty}B_{m}(t,\theta)dt$,
defined in Lemma \ref{intrep}. In the next two lemmas, we find
an upper bound for the first integral and a representation for the
second integral similar to that in Watson's lemma.

\begin{lemma} \label{upperbound} Let $x$, $y$, $r$, and $\theta$
be as in Lemma \ref{rootprops}, and let $\alpha\in(0,1)$. The following
inequality holds for all $\theta\in(\frac{2\pi}{3},\pi)$ and $m\geq0$:
\[
\int_{0}^{\infty}A_{m}(t,\theta)dt<\frac{\Gamma(1-\alpha)}{|x-y|^{2\alpha}x^{m+\alpha}m^{1-\alpha}}.
\]
\end{lemma}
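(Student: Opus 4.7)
The plan is to control the integrand pointwise by a simpler expression, reduce to a Beta integral, and then dispatch the resulting gamma-function ratio via log-convexity.

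First I would exploit the fact that $t \ge 0$, $x > 0$, and $|x-y|^2 > 0$ to drop the middle factor: since $t^{2} + 3xt + |x-y|^{2} \ge |x-y|^{2}$ on $[0,\infty)$, we get
\[
\int_{0}^{\infty} A_{m}(t,\theta)\,dt \;\le\; \frac{1}{|x-y|^{2\alpha}} \int_{0}^{\infty} \frac{dt}{t^{\alpha}(x+t)^{m+1}}.
\]
Next, the substitution $t = xu$ converts the remaining integral into
\[
\frac{1}{x^{m+\alpha}} \int_{0}^{\infty} \frac{du}{u^{\alpha}(1+u)^{m+1}} \;=\; \frac{B(1-\alpha,\, m+\alpha)}{x^{m+\alpha}} \;=\; \frac{1}{x^{m+\alpha}}\cdot\frac{\Gamma(1-\alpha)\,\Gamma(m+\alpha)}{\Gamma(m+1)},
\]
where I recognize the Beta function (the integral representation $B(p,q) = \int_0^\infty u^{p-1}(1+u)^{-p-q}\,du$ with $p = 1-\alpha$ and $q = m+\alpha$).

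To match the stated bound, it then suffices to prove the strict inequality
\[
\frac{\Gamma(m+\alpha)}{\Gamma(m+1)} \;<\; \frac{1}{m^{\,1-\alpha}} \qquad (0<\alpha<1,\ m \ge 1).
\]
I would obtain this from the log-convexity of $\Gamma$ (Bohr--Mollerup): writing $m+\alpha = (1-\alpha)m + \alpha(m+1)$, strict convexity of $\log\Gamma$ gives
\[
\Gamma(m+\alpha) \;<\; \Gamma(m)^{1-\alpha}\,\Gamma(m+1)^{\alpha},
\]
and dividing both sides by $\Gamma(m+1) = m\,\Gamma(m)$ yields $\Gamma(m+\alpha)/\Gamma(m+1) < m^{\alpha-1}$, as required. (The edge case $m = 0$ is trivial because the right-hand side of the claimed bound is infinite.)

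I do not anticipate any real obstacle here; the only subtle point is justifying the strict inequality rather than a weak one, but strict log-convexity of $\log\Gamma$ on $(0,\infty)$ for non-degenerate convex combinations takes care of this. Assembling the three steps gives exactly the upper bound $\Gamma(1-\alpha)/(|x-y|^{2\alpha} x^{m+\alpha} m^{1-\alpha})$ stated in the lemma.
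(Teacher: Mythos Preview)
Your proposal is correct and follows essentially the same path as the paper: bound $t^{2}+3xt+|x-y|^{2}\ge |x-y|^{2}$, substitute $t=xu$, recognize the Beta integral, and then control $\Gamma(m+\alpha)/\Gamma(m+1)$ by $m^{\alpha-1}$. The only difference is cosmetic: the paper cites this last step as Gautschi's inequality, whereas you derive it directly from the strict log-convexity of $\Gamma$, which is a standard proof of that inequality.
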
 
\begin{proof}
The inequality
\begin{equation}
\frac{1}{(t^{2}+3xt+|x-y|^{2})^{\alpha}}\leq\frac{1}{|x-y|^{2 \alpha}}\label{eq:factorAmineq}
\end{equation}
and (\ref{eq:Amdef}) give us the following
bound 
\[
\int_{0}^{\infty}A_{m}(t,\theta)dt\leq\frac{1}{|x-y|^{2\alpha}}\int_{0}^{\infty}\frac{1}{t^{\alpha}(x+t)^{m+1}}dt
\]
for all $\theta\in(\frac{2\pi}{3},\pi)$. We factor out $x^{m+1}$
from the denominator of the integrand and then apply the change of
variables $u=t/x$ to obtain 
\[
\int_{0}^{\infty}A_{m}(t,\theta)dt\leq\frac{1}{|x-y|^{2\alpha}x^{m+\alpha}}\int_{0}^{\infty}\frac{1}{u^{\alpha}(1+u)^{m+1}}du.
\]
By the formula 
\[
\int_{0}^{\infty}\frac{t^{z_{1}-1}}{(1+t)^{z_{1}+z_{2}}}dt=\frac{\Gamma(z_{1})\Gamma(z_{2})}{\Gamma(z_{1}+z_{2})},
\]
(see Chapter 6 of \cite{AS}), we obtain 
\[
\int_{0}^{\infty}A_{m}(t,\theta)dt\leq\frac{1}{|x-y|^{2\alpha}x^{m+\alpha}}\cdot\frac{\Gamma(1-\alpha)\Gamma(m+\alpha)}{\Gamma(m+1)}.
\]
Gautschi's inequality, (see \cite{FQ}) 
\[
m^{1-\alpha}<\frac{\Gamma(m+1)}{\Gamma(m+\alpha)},
\]
gives 
\[
\int_{0}^{\infty}A_{m}(t,\theta)dt<\frac{\Gamma(1-\alpha)}{|x-y|^{2\alpha}x^{m+\alpha}}\cdot\frac{1}{m^{1-\alpha}}.
\]
\end{proof}
\begin{lemma} \label{watsonform} Let $x$, $y$, $r$, and $\theta$
be as in Lemma \ref{rootprops}. For each $\theta\in(\frac{2\pi}{3},\pi)$,
define 
\begin{equation}
g(u,\theta)=\frac{1}{\left(\frac{e^{u}-1}{u}\right)^{\alpha}\left(1+\frac{e^{i\theta}(e^{u}-1)}{2\cos\theta+e^{i\theta}}\right)^{\alpha}\left(1-\frac{ie^{i\theta}(e^{u}-1)}{2\sin\theta}\right)^{\alpha}}.\label{eq:gudef}
\end{equation}
Then for any $m\geq0$, we have 
\begin{equation}
\int_{0}^{\infty}B_{m}(t,\theta)dt=\frac{(-i)^{1-\alpha}}{(x-y)^{\alpha}(2r\sin\theta)^{\alpha}y^{m+\alpha}}\int_{0}^{\infty}g(u,\theta)u^{-\alpha}e^{-mu}du.\label{eq:watsonint}
\end{equation}
\end{lemma}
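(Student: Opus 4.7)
The form of the right-hand side of (\ref{eq:watsonint})---with the prefactor $y^{-(m+\alpha)}$, the exponential $e^{-mu}$, and the extra factor $u^{-\alpha}$---strongly suggests the substitution
\[
y + it \;=\; y e^{u}, \qquad u \;=\; \log\!\Bigl(1 + \tfrac{it}{y}\Bigr),
\]
under which $t = -iy(e^{u}-1)$ and $dt = -iy\,e^{u}\,du$. The factor $(y+it)^{-(m+1)}$ then becomes $y^{-(m+1)} e^{-(m+1)u}$, and once one power of $e^{u}$ is absorbed into the Jacobian precisely $e^{-mu}$ remains. My plan is to apply this substitution, rewrite each of the remaining factors of $B_{m}(t,\theta)$ by pulling out $u$-independent constants, and then deform the resulting contour onto the positive real axis.

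Using $x = -2r\cos\theta$ and $y = re^{i\theta}$ from Lemma \ref{rootprops}, direct algebra gives
\[
x - y - it \;=\; (x-y)\!\left(1 + \tfrac{e^{i\theta}(e^{u}-1)}{2\cos\theta + e^{i\theta}}\right),
\]
$t + 2r\sin\theta = (2r\sin\theta)(1 - ie^{i\theta}(e^{u}-1)/(2\sin\theta))$, and $t = -iy \cdot u \cdot (e^{u}-1)/u$. Assembling these together with the Jacobian, the constant $(-iy)^{1-\alpha}/y^{m+1} = (-i)^{1-\alpha}/y^{m+\alpha}$ factors out---matching the prefactor of (\ref{eq:watsonint})---and what remains of the integrand is precisely $g(u,\theta)\,u^{-\alpha}\,e^{-mu}$. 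This converts $\int_{0}^{\infty} B_{m}(t,\theta)\,dt$ into the integral of $g(u,\theta)\,u^{-\alpha}\,e^{-mu}$ along the curve $\Gamma$ obtained by applying $u = \log(1 + it/y)$ to $(0,\infty)$. An elementary analysis of $\arg(1+it/y)$ and $|1+it/y|$ shows that $\Gamma$ starts at $u = 0$, lies in the closed fourth quadrant, and is asymptotic to the horizontal line $\Im u = \pi/2 - \theta$ as $t \to \infty$ (note $\pi/2 - \theta \in (-\pi/2, -\pi/6)$ since $\theta \in (2\pi/3,\pi)$).

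To replace $\Gamma$ by $(0,\infty)$, I would apply Cauchy's theorem to the closed contour formed by $\Gamma$ truncated at $\Re u = T$, a vertical segment rising to the real axis, the segment $[\varepsilon,T]$ traversed backwards, and a small circular arc of radius $\varepsilon$ around the origin. The main obstacle is justifying this deformation, which amounts to two checks: (i) the function $g(u,\theta)\,u^{-\alpha} e^{-mu}$ is analytic on the closed region enclosed by these two contours; and (ii) the small-arc and vertical-cap contributions vanish as $\varepsilon \to 0$ and $T \to \infty$. For (i), each of the three bases in the denominator of $g$ equals $1$ at $u = 0$, and a short verification using the explicit formulas shows that as $u$ ranges over the strip $\{\Re u \geq 0,\ \Im u \in [\pi/2 - \theta, 0]\}$ they remain in $\C \setminus (-\infty, 0]$, so all principal branches are analytic there. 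For (ii), the factor $u^{-\alpha}$ with $\alpha \in (0,1)$ kills the small-arc contribution as $\varepsilon \to 0$; meanwhile, on the vertical cap each base in $g$ grows like $e^{u}$ for large $\Re u$, so $g$ decays like $e^{-3\alpha u}$ and the full integrand like $T^{\alpha} e^{-(m+3\alpha)T}$, which tends to $0$ as $T \to \infty$. Cauchy's theorem then equates the integral over $\Gamma$ with the integral over $(0,\infty)$, establishing (\ref{eq:watsonint}).
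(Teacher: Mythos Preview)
Your proof is correct and follows essentially the same route as the paper. The only difference is the order of operations: the paper first rotates the $t$-contour from $[0,\infty)$ onto the ray $\{-iys:s>0\}$ (showing the connecting arcs at $0$ and $\infty$ vanish) and \emph{then} applies the real substitution $e^{u}=1+s$, whereas you apply the composite substitution $u=\log(1+it/y)$ first and then deform the resulting $u$-contour $\Gamma$ onto $[0,\infty)$; these are images of one another under the same conformal map, and the algebraic factorizations you record are exactly those the paper uses.
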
 
\begin{proof}
For arbitrary positive real numbers $\varepsilon<R$ we define the
curves $\ell:[\varepsilon/r,R]\to\C$ and $\sigma_{\beta}:[0,\theta-\frac{\pi}{2}]$,
$\beta\in\{\varepsilon,R\}$, by (see Figure \ref{fig:contourBm})
\begin{align*}
\ell(t) & =-iyt,\text{ and}\\
\sigma_{\beta}(t) & =\beta e^{it}.
\end{align*}
Because the function (in $t$) 
\[
\frac{1}{(x-y-it)^{\alpha}(t^{2}+2(r\sin\theta)t)^{\alpha}(y+it)^{m+1}}
\]
has no zeros in the region enclosed by $\gamma_{R}$, $\sigma_{R}$,
$\ell$, and the real interval $[\varepsilon,R]$, we derive 
\begin{equation}
\int_{\varepsilon}^{R}B_{m}(t,\theta)dt+\int_{\sigma_{R}}B_{m}(t,\theta)dt-\int_{\ell}B_{m}(t,\theta)dt-\int_{\sigma_{\varepsilon}}B_{m}(t,\theta)dt=0.\label{split}
\end{equation}
We will take the limit $\varepsilon\to0$ and show that the integral
along $\sigma_{\varepsilon}$ vanishes, and then we take the limit
$R\to\infty$ and show that the integral along $\sigma_{R}$ vanishes.
Indeed, the second integral in the left side of (\ref{split}) approaches
$0$ as $R\rightarrow\infty$ since $|t|=R$ for all $t$ in the image
of $\sigma_{R}$ and 
\begin{align*}
 & \left|\int_{\sigma_{R}}\frac{dt}{(x-y-it)^{\alpha}(t^{2}+2(r\sin\theta)t)^{\alpha}(y+it)^{m+1}}\right|\\
 & \leq\frac{\frac{\pi}{2}R}{R^{\alpha}|R-|2r\sin\theta||^{\alpha}|R-|x-y||^{\alpha}|R-r|^{m+1}}\rightarrow0
\end{align*}
as $R\to\infty$. For $\sigma_{\varepsilon}$, similar computations
give 
\[
\left|\int_{\sigma_{\varepsilon}}\frac{dt}{(x-y-it)^{\alpha}(t^{2}+2(r\sin\theta)t)^{\alpha}(y+it)^{m+1}}\right|\le\frac{\frac{\pi}{2}\epsilon}{\epsilon^{\alpha}(2r\sin\theta-\epsilon)^{\alpha}(|x-y|-\epsilon)^{\alpha}(r-\epsilon)^{m+1}}\rightarrow0
\]
as $\epsilon\rightarrow0$.

From (\ref{split}), we let $\epsilon\rightarrow0$ and $R\rightarrow0$
to obtain 
\begin{align*}
\int_{0}^{\infty}B_{m}(t,\theta)dt & =-iy\int_{0}^{\infty}B_{m}(-iyt,\theta)dt\\
 & =-iy\int_{0}^{\infty}\frac{dt}{\left(x-y-yt\right)^{\alpha}\left((-iyt)^{2}+(2r\sin\theta)(-iyt)\right)^{\alpha}\left(y+yt\right)^{m+1}}.
\end{align*}
Factor out $-iy$ from the middle term and $y$ from the last term in the denominator and apply the change of variables 
\[
e^{u}=1+t
\]
to obtain 
\[
\begin{split} & \int_{0}^{\infty}B_{m}(t,\theta)dt\\
 & =\frac{(-iy)^{1-\alpha}}{y^{m+1}}\int_{0}^{\infty}\frac{du}{(x-ye^{u})^{\alpha}((2r\sin\theta)(e^{u}-1)-iy(e^{u}-1)^{2}))^{\alpha}e^{(m+1)u}}.
\end{split}
\]
We deduce from $y=re^{i\theta}$ that 
\[
((2r\sin\theta)(e^{u}-1)-iy(e^{u}-1)^{2}))^{\alpha}=u^{\alpha}\left(\frac{e^{u}-1}{u}\right)^{\alpha}(2r\sin\theta)^{\alpha}\left(1-\frac{ie^{i\theta}(e^{u}-1)}{2\sin\theta}\right)^{\alpha}.
\]
Similarly Lemma \ref{rootprops} gives

\begin{align*}
(x-ye^{u})^{\alpha} & =(x-y)^{\alpha}\left(1-\frac{y(e^{u}-1)}{x-y}\right)^{\alpha}\\
 & =(x-y)^{\alpha}\left(1-\frac{e^{i\theta}(e^{u}-1)}{x/r-e^{i\theta}}\right)^{\alpha}\\
 & =(x-y)^{\alpha}\left(1-\frac{e^{i\theta}(e^{u}-1)}{-2\cos\theta-e^{i\theta}}\right)^{\alpha}
\end{align*}
and the lemma follows. 
\end{proof}
\begin{figure}
\begin{centering}
\includegraphics[scale=0.3]{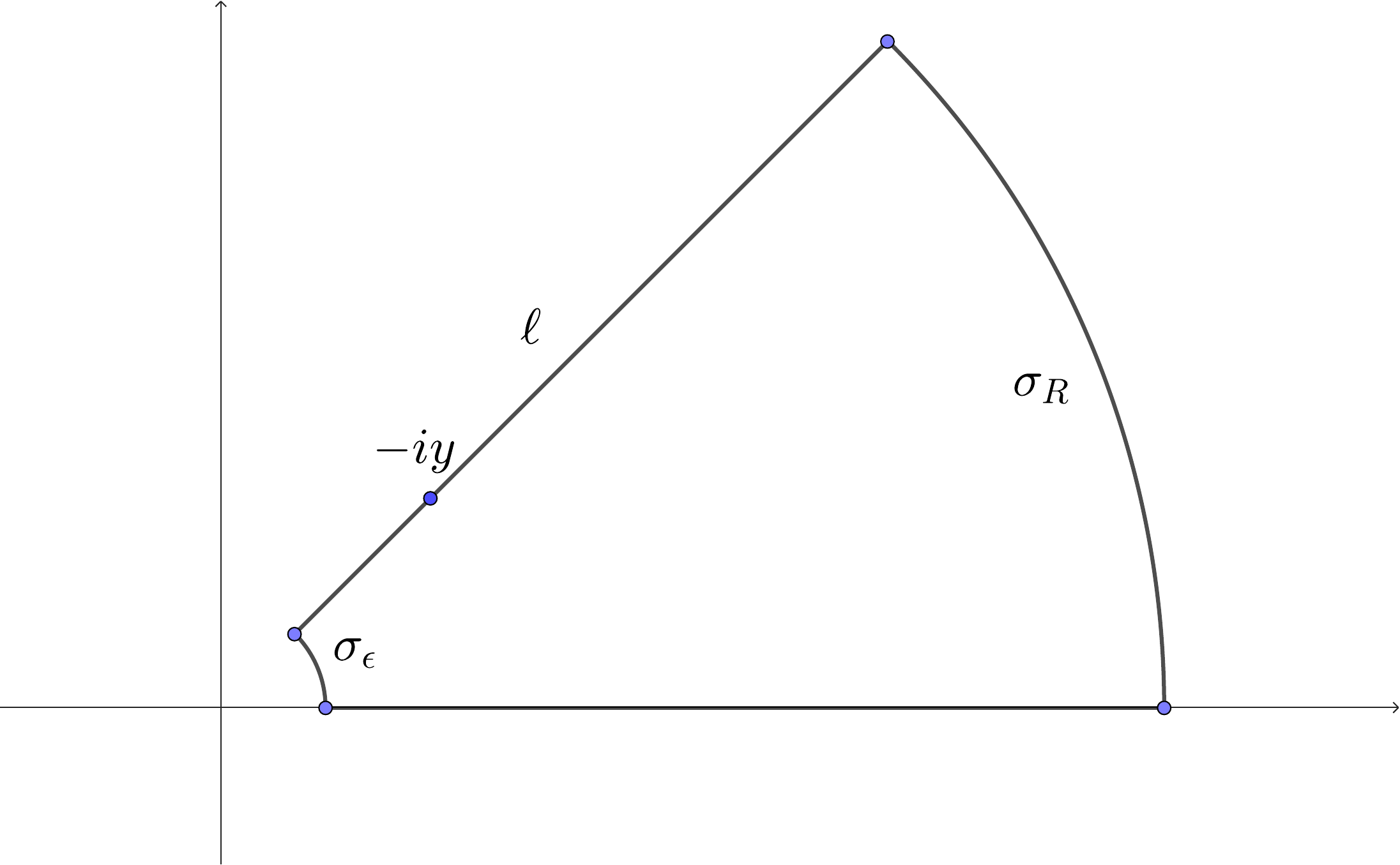} 
\par\end{centering}
\caption{The contour integral of $B_{m}(t,\theta)$}\label{fig:contourBm}
\end{figure}
Our approach to locate the zeros of $P_m^{(\alpha)}(z(\theta))$  given in Lemma \ref{intrep} is to show that the integral of $B_m(t,\theta)$ dominates that of $A_m(t,\theta)$ in modulus. As the integral of $B_m(t,\theta)$ is given in (\ref{eq:watsonint}), we prove this  dominance by finding an asymptotic formula for the integral on the right side of this equation, which is only uniform if $\theta$ stays away from $\pi$. In particular, we have the lemma below. 
\begin{lemma} \label{lem:globalasymp}Define $g(u,\theta)$ as in
(\ref{eq:gudef}). For any small fixed (independent of $m$) $\xi>0$,
we have as $m\rightarrow\infty$, 
\[
\int_{0}^{\infty}g(u,\theta)u^{-\alpha}e^{-mu}du\sim\frac{\Gamma(1-\alpha)}{m^{1-\alpha}}
\]
uniformly on $\theta\in(2\pi/3,\pi-\xi).$ \end{lemma}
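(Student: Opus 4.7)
The plan is a uniform Watson's lemma argument. Two observations are preliminary: (i) each of the three factors in the denominator of (\ref{eq:gudef}) equals $1$ at $u=0$, so $g(0,\theta)=1$; and (ii) on $\theta\in(2\pi/3,\pi-\xi)$ we have $\sin\theta\ge\sin\xi$ and $|2\cos\theta+e^{i\theta}|=|3\cos\theta+i\sin\theta|\ge 3/2$, so the coefficients appearing in $g$ are uniformly controlled. A direct computation of real parts (using $\cos\theta<0$ on this range) shows that each of the three factors lies in the open right half-plane for every $u>0$, so the principal-branch powers are analytic in $u$ and $g(u,\theta)$ extends continuously to $[0,\infty)\times[2\pi/3,\pi-\xi]$.

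Fix a small $\delta>0$ and split the integral at $u=\delta$. For the tail $u\ge\delta$: each of the three factors grows in modulus like a $\theta$-dependent constant times $e^{u}$ as $u\to\infty$, so $|g(u,\theta)|\le C(\xi)\,u^{\alpha}e^{-3\alpha u}$ uniformly, and consequently
\[
\int_{\delta}^{\infty}|g(u,\theta)|\,u^{-\alpha}e^{-mu}\,du=O\!\bigl(e^{-m\delta}\bigr),
\]
which is exponentially smaller than $m^{-(1-\alpha)}$. For the principal part $u\in[0,\delta]$, a first-order Taylor expansion of each factor about $u=0$ yields $g(u,\theta)=1+h(u,\theta)$ with $|h(u,\theta)|\le C(\xi)\,u$ on this interval. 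Writing
\[
\int_{0}^{\delta}g(u,\theta)\,u^{-\alpha}e^{-mu}\,du=\int_{0}^{\infty}u^{-\alpha}e^{-mu}\,du-\int_{\delta}^{\infty}u^{-\alpha}e^{-mu}\,du+\int_{0}^{\delta}h(u,\theta)\,u^{-\alpha}e^{-mu}\,du,
\]
the first term equals $\Gamma(1-\alpha)/m^{1-\alpha}$, the second is exponentially small, and the third is bounded in modulus by $C(\xi)\int_{0}^{\infty}u^{1-\alpha}e^{-mu}\,du=O(m^{-(2-\alpha)})$. Combining this with the tail estimate yields the claimed uniform asymptotic.

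The one delicate point, and the main obstacle, is making the Taylor constant $C(\xi)$ genuinely uniform in $\theta$. The third factor of $g$ contains $1/(2\sin\theta)$ multiplying $(e^{u}-1)$, so its $O(u)$ correction carries a factor $1/\sin\theta$ that blows up as $\theta\to\pi$. This is exactly why a neighborhood of $\pi$ must be excluded by the margin $\xi$: on the compact sub-interval $[2\pi/3,\pi-\xi]$ all the $\theta$-dependent constants (including $|3\cos\theta+i\sin\theta|^{-1}$ and $1/\sin\theta$) are bounded, and joint continuity then produces a single $C(\xi)$ that works throughout the interval.
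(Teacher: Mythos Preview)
Your proof is correct and is a standard Watson's-lemma argument, the same strategy the paper uses: both rely on $g(0,\theta)=1$ and on the uniform boundedness of the $\theta$-dependent coefficients $1/(2\sin\theta)$ and $1/(2\cos\theta+e^{i\theta})$ on $[2\pi/3,\pi-\xi]$ to isolate the leading term $\Gamma(1-\alpha)/m^{1-\alpha}$. The only difference is in the decomposition. The paper splits at the moving point $u=m^{-1/2}$, uses the crude bound $g=\mathcal{O}(1)$ on the tail $[m^{-1/2},\infty)$ to get an $\mathcal{O}(e^{-\sqrt{m}}/m^{1-\alpha/2})$ contribution, and on $[0,m^{-1/2}]$ replaces each factor of $g$ by $1+\mathcal{O}(m^{-1/2})$. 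You instead split at a fixed $\delta$, bound the tail by the decay $|g|\le C(\xi)u^{\alpha}e^{-3\alpha u}$, and on $[0,\delta]$ use the first-order Taylor remainder $|g-1|\le C(\xi)u$. Your version yields the slightly sharper error $\mathcal{O}(m^{-(2-\alpha)})$, while the paper's moving split avoids having to justify the Taylor bound on a fixed interval; both are routine variants of the same idea.
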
 
\begin{proof}
We break the range of integration as 
\begin{equation}
\int_{0}^{\infty}g(u,\theta)u^{-\alpha}e^{-mu}du=\int_{0}^{1/\sqrt{m}}g(u,\theta)u^{-\alpha}e^{-mu}du+\int_{1/\sqrt{m}}^{\infty}g(u,\theta)u^{-\alpha}e^{-mu}du.\label{eq:intgsplit}
\end{equation}
We first address the second integral on the right side. Since $g(u,\theta)=\mathcal{O}(1)$
by (\ref{eq:gudef}), we have 
\[
\int_{m^{-1/2}}^{\infty}g(u,\theta)u^{-\alpha}e^{-mu}du=\mathcal{O}\left(\int_{m^{-1/2}}^{\infty}u^{-\alpha}e^{-mu}du\right)\underset{u\rightarrow u/m}{=}\mathcal{O}\left(\frac{\Gamma(1-\alpha,\sqrt{m})}{m^{1-\alpha}}\right)
\]
where $\Gamma(-,-)$ is the upper incomplete gamma function
defined by 
\[
\Gamma(s,x)=\int_{x}^{\infty}z^{s-1}e^{-z}dz.
\]
From the asymptotic formula 
\[
\Gamma(s,x)\sim x^{s-1}e^{-x}
\]
as $x\rightarrow\infty$, we conclude that 
\begin{equation}
\int_{m^{-1/2}}^{\infty}g(u,\theta)u^{-\alpha}e^{-mu}du=\mathcal{O}\left(\frac{e^{-\sqrt{m}}}{m^{1-\alpha/2}}\right).\label{eq:boundtailint}
\end{equation}

We next address the first integral of \eqref{eq:intgsplit}: 
\[
\int_{0}^{1/\sqrt{m}}g(u,\theta)u^{-\alpha}e^{-mu}du.
\]
We note that for big $m$, $u\in(0,1/\sqrt{m})$ is small and 
\[
\left(\frac{e^{u}-1}{u}\right)^{-\alpha}=1+\mathcal{O}(1/\sqrt{m}).
\]
Similarly as $1/(2\cos\theta+e^{i\theta})=\mathcal{O}(1)$, 
\[
\left(1+\frac{e^{i\theta}(e^{u}-1)}{2\cos\theta+e^{i\theta}}\right)^{-\alpha}=1+\mathcal{O}(1/\sqrt{m}).
\]
We conclude that 
\[
\int_{0}^{1/\sqrt{m}}g(u,\theta)u^{-\alpha}e^{-mu}du=\int_{0}^{1/\sqrt{m}}\left(1-\frac{ie^{i\theta}(e^{u}-1)}{2\sin\theta}\right)^{-\alpha}u^{-\alpha}e^{-mu}(1+\mathcal{O}(1/\sqrt{m}))du.
\]

As $\theta\in(2\pi/3,\pi-\xi)$, we have $1/\sin\theta=\mathcal{O}(1)$
and consequently 
\[
\left(1-\frac{ie^{i\theta}(e^{u}-1)}{2\sin\theta}\right)^{-\alpha}=1+\mathcal{O}(1/\sqrt{m}),
\]
from which we conclude that 
\begin{align*}
\int_{0}^{1/\sqrt{m}}g(u,\theta)u^{-\alpha}e^{-mu}du & \sim\int_{0}^{1/\sqrt{m}}u^{-\alpha}e^{-mu}du\\
 & \underbrace{\sim}_{u\rightarrow u/m}\frac{\Gamma(1-\alpha)}{m^{1-\alpha}}.
\end{align*}
We compare the last expression with \eqref{eq:boundtailint} to conclude
the proof of this lemma. 

\end{proof}
With Lemma \ref{lem:globalasymp} at our disposal, we are ready to show that in the formula of $P_m^{(\alpha)}(z(\theta))$ given in Lemma \ref{intrep}, the integral of $B_m(t,\theta)$ dominates (in modulus) that of $A_m(t,\theta)$. We will see in the beginning of the next section that this dominance plays a crucial role in locating the zeros of $P_m^{(\alpha)}(z(\theta))$.
\begin{lemma} \label{inequality} Define $A_{m}(t,\theta)$ and $B_{m}(t,\theta)$
as in Lemma \ref{intrep}. There is $M>0$, independent of $\theta$, so that if $m\ge M$
then 
\[
0<\int_{0}^{\infty}A_{m}(t,\theta)dt<\left|\int_{0}^{\infty}B_{m}(t,\theta)dt\right|
\]
for all $\theta\in(2\pi/3,\pi)$. \end{lemma}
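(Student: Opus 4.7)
The plan is to combine the upper bound on $\int_0^\infty A_m(t,\theta)\,dt$ from Lemma \ref{upperbound} with an asymptotic lower bound on $|\int_0^\infty B_m(t,\theta)\,dt|$ obtained by substituting Lemma \ref{lem:globalasymp} into Lemma \ref{watsonform}. Positivity $\int_0^\infty A_m(t,\theta)\,dt>0$ is immediate because the integrand in \eqref{eq:Amdef} is a positive real number on $(0,\infty)$: by Lemma \ref{rootprops} one has $x>0$, so $t^\alpha$, $(x+t)^{m+1}$, and $t^2+3xt+|x-y|^2$ are all positive for $t>0$.

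Taking moduli in Lemma \ref{watsonform} (using $|y|=r$ and $|(-i)^{1-\alpha}|=1$) and inserting Lemma \ref{lem:globalasymp} yields, uniformly in $\theta\in(2\pi/3,\pi-\xi)$,
\[
\left|\int_0^\infty B_m(t,\theta)\,dt\right|\sim\frac{\Gamma(1-\alpha)}{|x-y|^\alpha(2r\sin\theta)^\alpha\,r^{m+\alpha}\,m^{1-\alpha}}.
\]
Dividing by the bound $\int_0^\infty A_m\le\Gamma(1-\alpha)/(|x-y|^{2\alpha}x^{m+\alpha}m^{1-\alpha})$ from Lemma \ref{upperbound} and invoking the identities $x/r=-2\cos\theta$ and $|x-y|^2=r^2(8\cos^2\theta+1)$ supplied by Lemma \ref{rootprops}, the ratio reduces to
\[
\frac{\left|\int_0^\infty B_m(t,\theta)\,dt\right|}{\int_0^\infty A_m(t,\theta)\,dt}\;\gtrsim\;\left(\frac{8\cos^2\theta+1}{4\sin^2\theta}\right)^{\!\alpha/2}\!(-2\cos\theta)^{m+\alpha}.
\]
On $(2\pi/3,\pi)$ one has $-2\cos\theta>1$ and $8\cos^2\theta+1>4\sin^2\theta$ (equivalently $|\cos\theta|>1/2$), so both factors strictly exceed $1$; on any compact subinterval of $(2\pi/3,\pi)$ the geometric factor $(-2\cos\theta)^{m+\alpha}$ grows exponentially in $m$, dominating the $1+o(1)$ error uniformly for large $m$.

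The main obstacle is extending this uniformity to both ends of $(2\pi/3,\pi)$. Near $\theta=\pi$, Lemma \ref{lem:globalasymp} ceases to apply because $1/\sin\theta$ enters the third factor of $g(u,\theta)$ and forces $g(u,\theta)\to 0$ pointwise; this range must be handled separately, for instance by rescaling $u\mapsto u\sin\theta$ in \eqref{eq:watsonint} and exploiting the compensating blow-up of the prefactor $(2r\sin\theta)^{-\alpha}$ in Lemma \ref{watsonform}, which by itself forces $|\int B_m|$ to dwarf $\int A_m$. Near $\theta=2\pi/3$, $x$ and $r$ both vanish and the asymptotic margin $(-2\cos\theta)^{m+\alpha}\left((8\cos^2\theta+1)/(4\sin^2\theta)\right)^{\alpha/2}-1$ degenerates to zero; one must then extract the $O(m^{-1/2})$ remainder rate from the proof of Lemma \ref{lem:globalasymp} and check that it is dominated by the linear-in-$m$ correction $(m+\alpha)\log(-2\cos\theta)\sim(m+\alpha)\sqrt{3}\,(\theta-2\pi/3)$ in order to secure a single $M$ independent of $\theta$. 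Patching the interior estimate with these endpoint refinements yields the uniform $M$ asserted in the lemma.
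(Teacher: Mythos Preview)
Your argument on the bulk of $(2\pi/3,\pi)$ coincides with the paper's: combine Lemmas~\ref{upperbound}, \ref{watsonform}, \ref{lem:globalasymp} and the strict inequalities $x/r=-2\cos\theta>1$ and $|x-y|/(2r\sin\theta)=\sqrt{8\cos^2\theta+1}/(2\sin\theta)>1$. That part is fine.

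The gap is at $\theta\to\pi$, where you give only a one-line sketch (rescale $u\mapsto u\sin\theta$, exploit the blow-up of $(2r\sin\theta)^{-\alpha}$). This is where essentially all of the paper's labor lives, and its method is different. The paper does not rescale. Writing $\eta=\pi-\theta$, it expands the singular factor of $g$ as $(1+\tfrac{i}{2}\tfrac{u}{\eta}(1+O(u+\eta)))^{-\alpha}$, bounds $\bigl|\int_0^{1/\sqrt m} g\,u^{-\alpha}e^{-mu}\,du\bigr|$ from below by $\int_0^{1/\sqrt m}|\Im g|\,u^{-\alpha}e^{-mu}\,du$ with explicit pointwise estimates $|\Im(\cdot)|\gtrsim u/\eta$ for $u\le 2\eta$ and $\gtrsim(\eta/u)^\alpha$ for $u>2\eta$, and then splits into three regimes ($2\eta>1/\sqrt m$; $2\eta\le 1/\sqrt m$ with $\eta m=\Omega(1)$; $\eta m=o(1)$). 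In each it closes the comparison via Lemma~\ref{upperbound} and the key fact $r<2x/3$ near $\theta=\pi$, so that $(x/r)^{m}$ supplies exponential slack even when the Watson integral is only polynomially small in $m$. Your rescaling idea would need comparable care; the naive limiting integral $\int_0^\infty(1+iv/2)^{-\alpha}v^{-\alpha}\,dv$ already diverges for $\alpha\le 1/2$.

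On the left endpoint $\theta\to 2\pi/3$: the paper does not isolate it; it is folded into $(2\pi/3,\pi-\xi)$ with a one-line ``follows directly''. You are right that the margin degenerates there, and your proposed balance of the $O(m^{-1/2})$ remainder against $(m+\alpha)\sqrt{3}\,(\theta-2\pi/3)$ still leaves the window $\theta-2\pi/3\lesssim m^{-3/2}$ uncovered, so that corner is not fully resolved in either argument.
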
 
\begin{proof}
In the case $\theta \in (2\pi/3,\pi - \xi)$, this lemma follows directly from Lemmas \ref{upperbound}, \ref{watsonform}  and  \ref{lem:globalasymp} and 
\[
2\sin\theta<\sqrt{3}<\sqrt{8\cos^{2}\theta+1} = \frac{|x-y|}{r}
\]
where the last equation comes from Lemma \ref{rootprops}. 

It remains to consider $\theta \in [\pi-\xi,\pi)$. In this case, to find a lower bound for
\[
\int_{0}^{\infty}g(u,\theta)u^{-\alpha}e^{-mu}du
\]
we apply similar arguments to those in the proof of the previous lemma and it suffices to find a lower bound for the first integral of \eqref{eq:intgsplit}: 
\[
\int_{0}^{1/\sqrt{m}}g(u,\theta)u^{-\alpha}e^{-mu}du=\int_{0}^{1/\sqrt{m}}\left(1-\frac{ie^{i\theta}(e^{u}-1)}{2\sin\theta}\right)^{-\alpha}u^{-\alpha}e^{-mu}(1+\mathcal{O}(1/\sqrt{m}))du.
\]
We let $\eta:=\pi-\theta\le\xi$ and deduce from $\sin\theta=\sin\eta$ that 
\begin{equation}
\left(1-\frac{ie^{i\theta}(e^{u}-1)}{2\sin\theta}\right)^{-\alpha}(1+\mathcal{O}(u))=\left(1+\frac{i}{2}\frac{u}{\eta}\left(1+\mathcal{O}(u+\eta)\right)\right)^{-\alpha}(1+\mathcal{O}(u)).\label{eq:gusmallu}
\end{equation}
We note that for small $u$ and $\eta$ 
\begin{align*}
 & \left|\sin\left(\Arg\left(\left(1+\frac{i}{2}\frac{u}{\eta}\left(1+\mathcal{O}(u+\eta)\right)\right)^{-\alpha}(1+\mathcal{O}(u))\right)\right)\right|\\
= & \left|\sin\left(-\alpha\Arg\left(1+\frac{i}{2}\frac{u}{\eta}\left(1+\mathcal{O}(u+\eta\right)\right)+\mathcal{O}(u)\right)\right|\\
> & \alpha\sin\left(\Arg\left(1+\frac{i}{2}\frac{u}{\eta}\right)\right)
\end{align*}
since $\alpha\in(0,1)$ and $1+\frac{i}{2}\frac{u}{\eta}\left(1+\mathcal{O}(u+\eta\right)$ lies in the first quadrant for small $u$ and $\eta$. The last expression
is 
\[
\frac{\alpha u/2\eta}{\sqrt{1+(u/2\eta)^{2}}}\ge\begin{cases}
\frac{\alpha u}{2\sqrt{2}\eta} & \text{ if }u/2\eta\le1\\
\frac{\alpha u/2\eta}{\sqrt{2(u/2\eta)^{2}}}=\frac{\alpha}{\sqrt{2}} & \text{ if }u/2\eta>1.
\end{cases}
\]
Note that or small $u$ and $\eta$, the modulus of the main term
of (\ref{eq:gusmallu}) is 
\[
\left(1+\frac{u^{2}}{4\eta^{2}}\right)^{-\alpha/2}\ge\begin{cases}
2^{-\alpha/2} & \text{ if }u/2\eta\le1\\
\left(\frac{u}{\sqrt{2}\eta}\right)^{-\alpha} & \text{\text{ if }}u/2\eta>1
\end{cases}.
\]
Thus 
\begin{equation}
\left|\Im\left(\left(1+\frac{i}{2}\frac{u}{\eta}\left(1+\mathcal{O}(u+\eta\right)\right)^{-\alpha}(1+\mathcal{O}(u))\right)\right|\ge\begin{cases}
\frac{\alpha u}{2^{(\alpha+3)/2}\eta} & \text{ if }u/2\eta\le1\\
\frac{\alpha}{\sqrt{2}}(\frac{u}{\sqrt{2}\eta})^{-\alpha} & \text{ if }u/2\eta>1
\end{cases}.\label{eq:Reglowerbound}
\end{equation}
We note that 
\begin{equation} \label{eq:lowerboundintg}
\left|\int_{0}^{1/\sqrt{m}}g(u,\theta)u^{-\alpha}e^{-mu}du\right|\ge\int_{0}^{1/\sqrt{m}}\left|\Im\left(\left(1-\frac{ie^{i\theta}(e^{u}-1)}{2\sin\theta}\right)^{-\alpha}(1+\mathcal{O}(u))\right)\right|u^{-\alpha}e^{-mu}du.
\end{equation}
If $2\eta>1/\sqrt{m}$, then the right side is 
\begin{align*}
&> \frac{\alpha}{\eta 2^{(\alpha+3)/2}}\int_0^{1/\sqrt{m}}u^{-\alpha+1}e^{-mu}du   \\
&= \frac{\alpha}{\eta 2^{(\alpha+3)/2}m^{2-\alpha}}\int_0^{\sqrt{m}}u^{-\alpha+1}e^{-u}du \\
&= \frac{\alpha \gamma(-\alpha+2,\sqrt{m})}{\eta 2^{(\alpha+3)/2}m^{2-\alpha}}
\end{align*}
where $\gamma(-,-)$ is the lower incomplete gamma function defined by 
\[
\gamma(s,x)=\int_0^x z^{s-1}e^{-z}dz.
\]
For large $m$, the inequality
\[
\int_{0}^{\infty}A_{m}(t,\theta)dt<\left|\int_{0}^{\infty}B_{m}(t,\theta)dt\right|
\]
follows from Lemmas \ref{watsonform}  and \ref{upperbound} and the fact that  for small $\xi$,  we have $|x-y|^2=(18+9/4)(1+\mathcal{O}(\eta))$ and  $|y|=r<2x/3$ (see Lemma \ref{rootprops}).

On the other hand, if $2\eta \le 1/\sqrt{m}$, then from (\ref{eq:Reglowerbound}) the right side of (\ref{eq:lowerboundintg}) is at least
\[
\frac{\alpha}{\eta 2^{(\alpha+3)/2}} \int_0^{2\eta} u^{-\alpha+1}e^{-mu}du+\frac{\alpha \eta^\alpha}{2^{(\alpha+1)/2}}\int_{2\eta}^{1/\sqrt{m}} u^{-2\alpha}e^{-mu}du
\]
which, after the substitution $u \rightarrow u/m$, becomes
\begin{equation} \label{eq:sumtwoints}
  \frac{\alpha}{\eta 2^{(\alpha+3)/2}m^{2-\alpha}}\int_0^{2\eta m }u^{-\alpha+1}e^{-u}du+\frac{\alpha \eta^\alpha }{2^{(\alpha+1)/2}m^{1-2\alpha}}\int_{2\eta m}^{\sqrt{m}} u^{-2\alpha}e^{-u}du.
\end{equation}

If $\eta m=o(1)$, then 
\[
\left| \int_{0}^{\infty}g(u,\theta)u^{-\alpha}e^{-mu}du\right|
\]
is at least the second summand in the expression above. Thus the fact that $\sin(\theta)=\sin(\eta)=\eta(1+\mathcal{O}(\eta))$ and Lemma \ref{watsonform} give
\[
\left|\int_{0}^{\infty}B_{m}(t,\theta)dt\right|\ge \frac{\alpha}{2^{(\alpha+1)/2}m^{1-2\alpha}(2r)^{\alpha}|y|^{m+\alpha}}\int_{2\eta m}^{\sqrt{m}}u^{-2\alpha}e^{-u}du.
\]
The conclusion of this lemma follows from Lemma \ref{upperbound}, $|x-y|^2=(18+9/4)(1+\mathcal{O}(\eta))$ and  $|y|=r<2x/3$. 

In the final case when $\eta m =\Omega(1)$, 
\[
\left| \int_{0}^{\infty}g(u,\theta)u^{-\alpha}e^{-mu}du\right|
\]
is at least the first summand of  (\ref{eq:sumtwoints}). As the integral in this summand does not approach $0$ as $m\rightarrow \infty$, the conclusion of this lemma follows from the same reasons as those in the case $2\eta>1/\sqrt{m}$. 

\end{proof}

\section{Main Results}

\label{main} In this section, we prove our Theorem \ref{thm:zerodensity}. To prove the first claim of this theorem that for large $m$ all the zeros of $P_m^{(\alpha)}(z)$ lie on $(-\infty,-4/27)$, we will show that number of zeros of $P_{m}^{(\alpha)}(z(\theta))$ on the interval $\theta\in(2\pi/3,\pi)$ is at least $\left\lfloor m/3\right\rfloor $, which is the degree of $P_{m}^{(\alpha)}(z)$. As a consequence of the Fundamental Theorem of Algebra, all the zeros of $P_{m}(z)$ will lie on $z((2\pi/3,\pi))=(-\infty,-4/27)$ (c.f. Lemma \ref{lem:zmonotone}). From Lemmas \ref{intrep} and \ref{inequality}, we conclude
that at a value of $\theta\in(2\pi/3,\pi)$ where 
\[
h_{m}(\theta):=\int_{0}^{\infty}B_{m}(t,\theta)dt
\]
is in $i\mathbb{R}^{+}$ or $i\mathbb{R}^{-}$, $P_{m}^{(\alpha)}(z(\theta))$ is negative or positive respectively. Thus if $\theta_1 < \theta_2 <\ldots <\theta_K\in (2\pi/3,\pi)$ satisfy $h_m(\theta_k)\in i\mathbb{R^+}$ (or $i\mathbb{R^-}$ ) and $h_m(\theta_{k+1})\in i \mathbb{R^-}$ (or $i\mathbb{R^+})$  then by the Intermediate Value Theorem, there is at least a zero of $P_m^{(\alpha)}(z(\theta))$   on the interval $(\theta_k,\theta_{k+1})$, $1\le k<K$.  The number of such intervals could be computed by studying how the curve $h_m(\theta)$ winds around the origin.  In particular, this number is at least
\begin{equation}
\left\lfloor \frac{|\Delta\arg_{(2\pi/3,\theta_K)} h_{m}(\theta)|}{\pi}\right\rfloor =\left\lfloor   \frac{|\Delta\arg_{(2\pi/3,\pi)} h_{m}(\theta) - \Delta\arg_{(\theta_K,\pi)} h_{m}(\theta)|}{\pi}  \right\rfloor.\label{eq:numberzeros}
\end{equation}
By (\ref{eq:watsonint}), the change in argument of $h_{m}(\theta)$
on $\theta\in(2\pi/3,\pi)$ is 
\begin{equation}
-\alpha\Delta\arg_{(2\pi/3,\pi)}(x-y)-(m+\alpha)\Delta\arg_{(2\pi/3,\pi)} y+\Delta\arg_{(2\pi/3,\pi)} \int_{0}^{\infty}g(u,\theta)u^{-\alpha}e^{-mu}du.\label{eq:changearg}
\end{equation}
Since  $\lim_{\theta\rightarrow\pi }\Arg(x-y)=0$ (as $y=r e^{i\theta}$ and $x\in\mathbb{R^+}$) and  (see Figure \ref{fig:anglerelation})
\[
\lim_{\theta\rightarrow 2\pi/3}-\Arg(x-y)=\lim_{\theta \rightarrow 2\pi/3}\tan^{-1}\left(\frac{1}{3}\tan(\pi-\theta)\right) = \frac{\pi}{6},
\]
we have  
\[
-\alpha\Delta\arg_{(2\pi/3,\pi)}(x-y)= -\frac{\alpha \pi}{6}.
\]
From $y=r e^{i\theta}$, the second term of (\ref{eq:changearg}) is 
\[
-\frac{(m+\alpha)\pi}{3}.
\]
Finally, Lemma \ref{lem:globalasymp} yields
\[
\Delta\arg_{(2\pi/3,\pi)} \int_{0}^{\infty}g(u,\theta)u^{-\alpha}e^{-mu}du=\lim_{\theta\rightarrow \pi}\Arg\int_0^\infty g(u,\theta)u^{-\alpha}e^{-mu}du
\]
where the right side lie on $(-\pi/2,0)$ by  (\ref{eq:gusmallu}). 
Hence
\begin{equation} \label{eq:changeargwholeint}
\Delta\arg_{(2\pi/3,\pi)} h_{m}(\theta) = -\frac{m\pi}{3}   - \frac{\alpha \pi}{2}+\lim_{\theta \rightarrow \pi}\Arg\int_0^\infty g(u,\theta)u^{-\alpha}e^{-mu}du. 
\end{equation}

To compute 
\[
\Delta\arg_{(\theta_K,\pi)} h_{m}(\theta)
\]
we note that as $\theta\rightarrow \pi$, $x-y\rightarrow 3+3/2$ (see Lemma \ref{rootprops})  and thus
\begin{align*}
&\lim_{\theta\rightarrow\pi}\sin\left(\Arg\left(\int_{0}^{\infty}B_{m}(t,\theta)dt\right)\right) \\
 = &\sin\left(-\frac{\pi(1-\alpha)}{2}-(m+\alpha)\pi+\lim_{\theta \rightarrow \pi}\Arg\int_0^\infty g(u,\theta)u^{-\alpha}e^{-mu}du\right) \\
  =& (-1)^m \sin\left(-\frac{\pi}{2}-\frac{\alpha\pi}{2}+\lim_{\theta \rightarrow \pi}\Arg\int_0^\infty g(u,\theta)u^{-\alpha}e^{-mu}du \right) .
\end{align*}
From the equation above and the fact that  $0<\alpha<1$ and 
\[
\Arg\int_0^\infty g(u,\theta)u^{-\alpha}e^{-mu}du  \in(-\pi/2,0),
\]
we conclude if $\theta_K\in(2\pi/3,\pi)$ is the largest angle so that $h_m(\theta_K)\in i\mathbb{R}^{\pm}$ or equivalently 
\[
\sin\left(\Arg\left(\int_{0}^{\infty}B_{m}(t,\theta_K)dt\right)\right)=\pm 1,
\]
then 
\[
\Delta\arg_{(\theta_K,\pi)} h_{m}(\theta) = -\frac{\alpha \pi}{2}+\lim_{\theta \rightarrow \pi}\Arg\int_0^\infty g(u,\theta)u^{-\alpha}e^{-mu}du.
\]
Thus by (\ref{eq:changeargwholeint}),  (\ref{eq:numberzeros}) is at least 
\[
\left \lfloor \frac{m}{3} \right \rfloor .
\]
 
\begin{figure}
\begin{centering}
\includegraphics[scale=0.3]{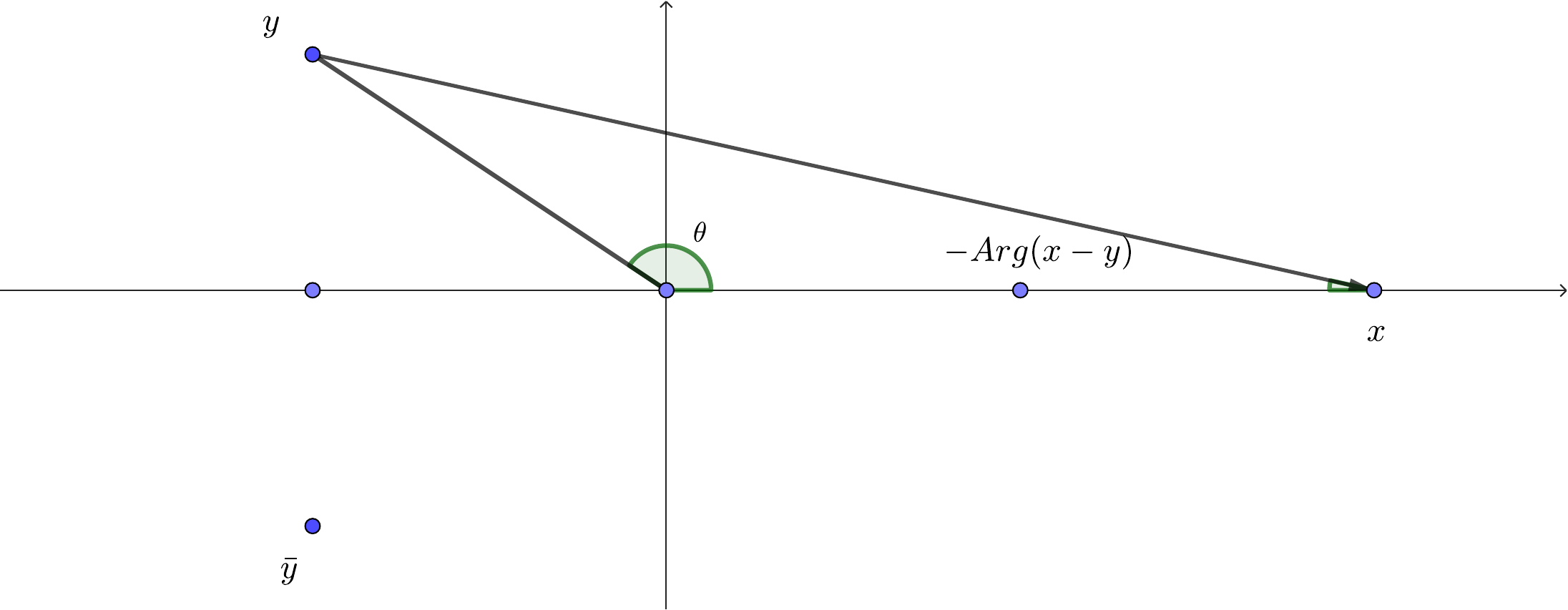}
\par\end{centering}
\caption{$\Arg(x-y)$}\label{fig:anglerelation}
\end{figure}
Each zero of $P_m ^{(\alpha)}(z(\theta))$ on $(2\pi/3,\pi)$  produces one zero of $P_m^{\alpha}(z)$ on the interval $(-\infty,-4/27)$ by the lemma below.
\begin{lemma}
\label{lem:zmonotone}The function 
\begin{equation}
z(\theta)=\frac{4\cos^{2}\theta}{(1-4\cos^{2}\theta)^{3}}\label{eq:ztheta}
\end{equation}
is increasing on $(2\pi/3,\pi)$ and it maps this interval to $(-\infty,-4/27)$. 
\end{lemma}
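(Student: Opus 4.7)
The plan is to reduce to a monotonicity statement in a single auxiliary variable. I would set $v=\cos^{2}\theta$ and rewrite
\[
z(\theta)=\frac{4v}{(1-4v)^{3}},
\]
viewed as a function $\tilde z(v)$ of $v$. On $\theta\in(2\pi/3,\pi)$, $\cos\theta$ is strictly negative and strictly decreases from $-1/2$ to $-1$, so $v=\cos^{2}\theta$ strictly increases from $1/4$ to $1$. Thus it suffices to show that $\tilde z$ is strictly increasing on $(1/4,1)$ and to identify its limits at the endpoints; monotonicity of $z(\theta)$ follows by composition with $v(\theta)$.

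For the monotonicity of $\tilde z$, I would just differentiate:
\[
\tilde z'(v)=\frac{4(1-4v)^{3}-4v\cdot 3(1-4v)^{2}(-4)}{(1-4v)^{6}}=\frac{4+32v}{(1-4v)^{4}}.
\]
Since $v\in(1/4,1)$, both numerator and denominator are strictly positive, hence $\tilde z'(v)>0$, so $\tilde z$ is strictly increasing on $(1/4,1)$. For the image, at the left endpoint $v\to 1/4^{+}$ we have $1-4v\to 0^{-}$, so $(1-4v)^{3}\to 0^{-}$ while the numerator $4v\to 1>0$; therefore $\tilde z(v)\to-\infty$. At the right endpoint $v\to 1^{-}$, the denominator tends to $(-3)^{3}=-27$ and the numerator tends to $4$, so $\tilde z(v)\to -4/27$. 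Combining monotonicity with these limits and the intermediate value theorem shows that $\tilde z$ maps $(1/4,1)$ bijectively onto $(-\infty,-4/27)$, and pulling back through $v=\cos^{2}\theta$ gives the claim for $z(\theta)$ on $(2\pi/3,\pi)$.

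There is essentially no obstacle here; the only point that needs a little care is the sign of $1-4v$ near $v=1/4$, which controls whether $z$ diverges to $+\infty$ or $-\infty$. Since we are to the right of $1/4$, $1-4v$ is negative, and raising to the odd power $3$ preserves the negative sign, giving the $-\infty$ limit consistent with the interval $(-\infty,-4/27)$ in Theorem \ref{thm:zerodensity}. Everything else is a one-line derivative and two endpoint evaluations.
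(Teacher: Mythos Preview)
Your argument is correct and is essentially the same as the paper's: both compute a derivative, check its sign, and evaluate the two endpoint limits. The only cosmetic difference is that you substitute $v=\cos^{2}\theta$ first and differentiate $\tilde z(v)$, whereas the paper differentiates $z(\theta)$ directly to get $z'(\theta)=-8\sin\theta\cos\theta(8\cos^{2}\theta+1)/(1-4\cos^{2}\theta)^{4}>0$; the two computations are related by the chain rule and nothing of substance changes.
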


\begin{proof}
This lemma follows from
\[
z'(\theta)=-\frac{8\sin\theta\cos\theta\left(8\cos^{2}\theta+1\right)}{(1-4\cos^{2}\theta)^{4}}>0
\]
and
\begin{align*}
\lim_{\theta\rightarrow2\pi/3}\frac{4\cos^{2}\theta}{(1-4\cos^{2}\theta)^{3}} & =-\infty,\\
\lim_{\theta\rightarrow\pi}\frac{4\cos^{2}\theta}{(1-4\cos^{2}\theta)^{3}} & =-\frac{4}{27}.
\end{align*}
\end{proof}

As the degree of $P_m^{\alpha}(z)$ is  $\lfloor m/3 \rfloor$, all the zeros of $P_m^{(\alpha)}(z)$ must lie on $(-\infty,-4/27)$.  For the remainder of the paper, we will find the the limiting probability density function of the zeros of $P_{m}^{(\alpha)}(z)$ as $m\rightarrow \infty$. For each $m>0$ , $z\in(-\infty-4/27)$, and small $\varepsilon>0$, let $N_{m,\epsilon}(z)$ be the number of zeros of $P_{m}^{(\alpha)}(z)$
on the interval $(z,z+\varepsilon)$. The limiting probability density function of the zeros of $P_{m}^{(\alpha)}(z)$ evaluated at each
$z\in(-\infty,-4/26)$ is defined as 
\[
\lim_{\varepsilon\rightarrow0}\frac{1}{\varepsilon}\lim_{m\rightarrow\infty}\frac{N_{m,\epsilon}(z)}{\left\lfloor m/3\right\rfloor }.
\]
Since the function $z(\theta)$ defined in (\ref{eq:ztheta}) is an increasing map, $N_{m,\varepsilon}(z)$ is the same as the number
of zeros of $P_{m}^{(\alpha)}(z(\theta))$ on the interval $(W(z),W(z+\epsilon))$, where $W(z)$ is the inverse of the map $z(\theta)$. From similar arguments to those in beginning of this section, this number is 
\[
\frac{1}{\pi}|\Delta \arg_{(W(z),W(z+\epsilon))}h_m(\theta)|+\mathcal{O}(1).
\]
By Lemma $\ref{watsonform}$, this expression is
\[
m\Delta\arg_{(W(z),W(z+\epsilon))} y+\mathcal{O}(1)
\]
and consequently as $y=re^{i\theta}$,
\[
N_{m,\varepsilon}(z)=\frac{m}{\pi}\left(W(z+\epsilon)-W(z)\right)+\mathcal{\mathcal{O}}(1).
\]
Thus 
\begin{equation}
\lim_{\varepsilon\rightarrow0}\frac{1}{\varepsilon}\lim_{m\rightarrow\infty}\frac{N_{m,\epsilon}(z)}{\left\lfloor m/3\right\rfloor }=\frac{3}{\pi}\lim_{\varepsilon\rightarrow0}\frac{W(z+\epsilon)-W(z)}{\varepsilon}=\frac{3}{\pi}W'(z).\label{eq:limitprob}
\end{equation}
Since 
\[
\frac{d}{d\theta}\left(\frac{4\cos^{2}\theta}{(1-4\cos^{2}\theta)^{3}}\right)=-\frac{4\sin(2\theta)(8\cos^{2}\theta+1)}{(1-4\cos^{2}\theta)^{4}},
\]
the last expression of (\ref{eq:limitprob}) becomes 
\begin{equation}
-\frac{3(1-4\cos^{2}\theta)^{4}}{4\pi\sin(2\theta)(8\cos^{2}\theta+1)}\label{eq:limitprobtheta}
\end{equation}
where $\theta=W(z)$.
We recall from Lemma \ref{rootprops} that the real root $x$ of $1+t+zt^{3}$
satisfies 
\[
4\cos^{2}\theta-1=x
\]
and hence as $\theta\in(2\pi/3,\pi)$ 
\[
\sin(2\theta)=-\sqrt{1-\cos^{2}(2\theta)}=-\sqrt{1-\left(2\cos^{2}\theta-1\right)^{2}}=-\frac{\sqrt{4-(x-1)^{2}}}{2}.
\]
Thus (\ref{eq:limitprobtheta}) becomes 
\[
\frac{3x^{4}}{2\pi(3+2x)\sqrt{4-(x-1)^{2}}}.
\]
Using 
\[
x^{3}=-\frac{x+1}{z},
\]
which comes from the fact that $x$ is a zero of $1+t+zt^{3}$, we
rewrite this expression as 
\[
-\frac{3x(x+1)}{2\pi z(3+2x)\sqrt{4-(1-x)^{2}}}=-\frac{3x\sqrt{x+1}}{2\pi z(3+2x)\sqrt{3-x}}
\]
and Theorem \ref{thm:zerodensity} follows.

\bibliographystyle{plain}
\bibliography{refs}

\end{document}